\def\R{\mathbb{R}}
\def\Z{\mathbb{Z}}
\def\C{\mathbb{C}}
 \newcommand{\dt}{\text{\rm d}t}
 \newcommand{\dx}{\text{\rm d}x}
\newcommandx{\emanuel}[2][1=]{\todo[linecolor=green,backgroundcolor=green!25,bordercolor=black,#1]{#2}}
\newcommandx{\diogo}[2][1=]{\todo[linecolor=orange,backgroundcolor=orange!25,bordercolor=orange,#1]{#2}}
\newcommandx{\mateus}[2][1=]{\todo[linecolor=blue,backgroundcolor=blue!25,bordercolor=blue,#1]{#2}}
\newcommandx{\danger}[2][1=]{\todo[linecolor=red,backgroundcolor=red!25,bordercolor=blue,#1]{#2}}
\renewcommand{\d}{\text{\rm d}}
\newtheorem{theorem}{Theorem}
\newtheorem{proposition}[theorem]{Proposition}
\newtheorem{lemma}[theorem]{Lemma}
\DeclareFontFamily{U}{tipa}{}
\DeclareFontShape{U}{tipa}{m}{n}{<->tipa10}{}
\newcommand{\arc@char}{{\usefont{U}{tipa}{m}{n}\symbol{62}}}%
\newcommand{\arc}[1]{\mathpalette\arc@arc{#1}}
\newcommand{\arc@arc}[2]{%
  \sbox0{$\m@th#1#2$}%
  \vbox{
    \hbox{\resizebox{\wd0}{\height}{\arc@char}}
    \nointerlineskip
    \box0
  }%
}
\numberwithin{equation}{section}
\newcommand{\intav}[1]{\mathchoice {\mathop{\vrule width 6pt height 3 pt depth  -2.5pt
\kern -8pt \intop}\nolimits_{\kern -6pt#1}} {\mathop{\vrule width
5pt height 3  pt depth -2.6pt \kern -6pt \intop}\nolimits_{#1}}
{\mathop{\vrule width 5pt height 3 pt depth -2.6pt \kern -6pt
\intop}\nolimits_{#1}} {\mathop{\vrule width 5pt height 3 pt depth
-2.6pt \kern -6pt \intop}\nolimits_{#1}}}
\newcommand{\intavl}[1]{\mathchoice {\mathop{\vrule width 6pt height 3 pt depth  -2.5pt
\kern -8pt \intop}\limits_{\kern -6pt#1}} {\mathop{\vrule width 5pt
height 3  pt depth -2.6pt \kern -6pt \intop}\nolimits_{#1}}
{\mathop{\vrule width 5pt height 3 pt depth -2.6pt \kern -6pt
\intop}\nolimits_{#1}} {\mathop{\vrule width 5pt height 3 pt depth
-2.6pt \kern -6pt \intop}\nolimits_{#1}}}
\title[Hilbert transforms and the equidistribution of zeros of polynomials]{Hilbert transforms and the \\ equidistribution of zeros of polynomials}
\author[AIM Group Project]{Emanuel Carneiro, Mithun Kumar Das, Alexandra Florea, Angel V. Kumchev,\\ Amita Malik, Micah B. Milinovich, Caroline Turnage-Butterbaugh, and Jiuya Wang}
\address{
ICTP - The Abdus Salam International Centre for Theoretical Physics\\
Strada Costiera, 11, I - 34151, Trieste, Italy \  and \  IMPA - Instituto de Matem\'{a}tica Pura e Aplicada\\
Rio de Janeiro - RJ, Brazil, 22460-320.}
\email{carneiro@ictp.it}
\email{carneiro@impa.br}
\address{Indian Institute of Science Education and Research Berhampur, Engg.~School Road, Berhampur, India-760010.}
\email{das.mithun3@gmail.com}
\address{University of California, Irvine, Department of Mathematics, Irvine, CA 92697.}
\email{floreaa@uci.edu}
\address{Department of Mathematics, Towson University, 8000 York Road, Towson, MD 21252, USA.}
\email{akumchev@towson.edu}
\address{American Institute of Mathematics,
600 East Brokaw Road,
San Jose, CA 95112-1006, USA \  and \  Max-Planck-Institut f\"{u}r Mathematik, Vivatsgasse 7, 53111 Bonn, Germany 
.}
\email{amita.malik@aimath.org}
\email{malik@mpim-bonn.mpg.de}
\address{Department of Mathematics, University of Mississippi, University, MS 38677, USA.}
\email{mbmilino@olemiss.edu}
\address{Department of Mathematics and Statistics, Carleton College, Northfield, MN 55057, USA.}
\email{cturnageb@carleton.edu}
\address{Department of Mathematics, Duke University, 120 Science Dr, Durham, NC, 27708, USA.}
\email{wangjiuy@math.duke.edu}
\date{\today}                                           
\begin{document}

\subjclass[2010]{42A05, 42A50}
\keywords{ Polynomials, Erd\H{o}s-Tur\'an inequality, equidistribution, discrepancy, Hilbert transform, extremal problems.}
\begin{abstract} 
We improve the current bounds for an inequality of Erd\H{o}s and Tur\'an from 1950 related to the discrepancy of angular equidistribution of the zeros of a given polynomial. Building upon a recent work of Soundararajan, we establish a novel connection between this inequality and an extremal problem in Fourier analysis involving the maxima of Hilbert transforms, for which we provide a complete solution. Prior to Soundararajan (2019), refinements of the discrepancy inequality of Erd\H{o}s and Tur\'an had been obtained by Ganelius (1954) and Mignotte (1992).

\end{abstract}

\maketitle

\section{Introduction}

\subsection{Background} 
Following the elegant treatment of Soundararajan \cite{S}, we revisit the classical work of Erd\H{o}s and Tur\'an \cite{ET} on the distribution of zeros of polynomials in the complex plane. In particular, we establish a connection between the upper bound for the discrepancy of the angles of the zeros of a given polynomial and an extremal problem in Fourier analysis involving the maxima of Hilbert transforms. Before describing this extremal problem, which is solved completely in this paper, we first describe our application in number theory. 

Let
\[
P(z) = \prod_{j=1}^N \big( z-\alpha_j\big) = z^N + a_{N-1} z^{N-1} + \cdots + a_0
\]
be a monic polynomial of degree $N$, with $a_0 \neq 0$ and roots $\alpha_j=\rho_j \, e^{2\pi i\theta_j}$. Roughly speaking, 
Erd\H{o}s and Tur\'an proved that if the {\it size} of $P(z)$ on the unit circle is small,
and $a_0$ is not too small, then its roots cluster around the unit circle and
the angles $2\pi\theta_j$ become equidistributed as $N \to \infty$. Two notions of size, or {\it height}, of a polynomial that have been considered in this problem are
$$H(P) = \max_{|z| =1} \, \frac{|P(z)|}{\sqrt{|a_0|}} \ \ \ {\rm and} \ \ \ h(P)  =  \int_{0}^{1} \log^+\!\left(\frac{\big|P\big(e^{2\pi i \theta}\big)\big|}{\sqrt{|a_0|}} \right)\,\d\theta,$$
where  $\log^+\!x = \max\{\log x, 0\}$. By Parseval's identity, we have 
\[
\int_{0}^1 |P\big(e^{2\pi i \theta}\big)\big|^2\,\d\theta = 1 + |a_{N-1}|^2 + \ldots + |a_0|^2,
\] 
from which it follows easily that  $H(P) \geq 1$ and therefore $h(P) \leq \log H(P)$. Hence, the assumption that $h(P)$ is small is weaker than the assumption that $H(P)$ is small. Let us also define the quantity 
$$\mathcal{M}(P) = \prod_{j=1}^N\max \left\{ \rho_j, \frac{1}{\rho_j}\right\}.$$
The observation that the zeros cluster around the unit circle is given by the inequality \cite[Theorem 1]{S}
$$\log \mathcal{M}(P) \leq 2\, h(P),$$
that follows by an interesting application of Jensen's formula in complex analysis. \\

We focus on the study of the equidistribution of the angles $2\pi\theta_j$. Given an interval $I$ on $\R/\Z$, we let $N(I;P)$ denote the number of zeros $\alpha_j = \rho_j \, e^{2\pi i \theta_j}$ for which $\theta_j \in I$. A convenient way to measure the distribution of the sequence $\{\theta_j\}_{j=1}^N$ is by means of its {\it discrepancy}, defined by
$$\mathcal{D}(P):= \sup_{I} \Big| N(I; P) - |I| N\Big|,$$
where $|I|$ denotes the length of the interval $I$. We list a few notable results in estimating the discrepancy $\mathcal{D}(P)$. Erd\H{o}s and Tur\'an, in their original paper \cite{ET} of 1950, proved that
\begin{equation}\label{20210227_14:46}
\mathcal{D}(P) \le C \sqrt{ N \log H(P)},
\end{equation}
with $C = 16$. In 1954, Ganelius \cite{G} established \eqref{20210227_14:46} with the constant $C = \sqrt{2\pi / k} =2.5619\ldots$, where $k=1/1^2 - 1/3^2 + 1/5^2 - \ldots =0.9159\ldots$ denotes Catalan's constant. Amoroso and Mignotte \cite{AM} have produced examples that show that the constant $C$ in \eqref{20210227_14:46} must be at least $\sqrt{2}$. In 1992, Mignotte \cite{M} refined Ganelius's result by establishing the stronger inequality
\begin{equation}\label{20210227_14:47}
\mathcal{D}(P) \le C \sqrt{ N \,h(P)},
\end{equation}
with the same constant $C = \sqrt{2\pi / k} =2.5619\ldots$. Only recently, in 2019, Soundararajan \cite{S} improved this result by establishing \eqref{20210227_14:47} with the constant 
$$C = \frac{8}{\pi} = 2.5464\ldots.$$

Our goal is to provide an improvement of the admissible value of $C$ in \eqref{20210227_14:47}. We follow the general outline of proof of Soundararajan in \cite{S} up to a certain point, then we diverge and introduce a novel ingredient:~the connection to a certain extremal problem in Fourier analysis involving the maxima of Hilbert transforms.  As a direct consequence of Theorems \ref{Thm1} and \ref{Thm2} below, we prove that the constant
\[
C = \frac{\ \ 4}{\sqrt{\pi}} = 2.2567\ldots
\]
is admissible in \eqref{20210227_14:47} and show that this constant is the best possible with our particular strategy.

\begin{theorem}\label{Cor2} If $P$ is a monic polynomial of degree $N$ with $P(0)\ne 0$, then 
$$\mathcal{D}(P) \le \frac{\ \ 4}{\sqrt{\pi}}  \, \sqrt{ N \,h(P)}.$$
\end{theorem}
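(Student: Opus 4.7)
The plan is to follow the approach of Soundararajan \cite{S}, which reduces the discrepancy estimate to a Fourier-analytic extremal problem on the circle, and then to solve that problem sharply. The novelty, and the source of the improved constant $4/\sqrt{\pi}$, is the identification of the correct extremal quantity as the $L^\infty$-norm of a periodic Hilbert transform.

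\textbf{Step 1: Reduction to a Fourier problem.} For an arc $I\subset\R/\Z$ and any real trigonometric polynomial $\phi$ on $\R/\Z$ with $\phi\ge\one_I$, one has $N(I;P)-|I|N\le \sum_{j=1}^N\phi(\theta_j)-N\int_0^1 \phi$. The explicit Fourier coefficients of $\log|e^{2\pi i\theta}-\alpha|$ (a direct computation via $\log|1-z|=-\tfrac12\sum_{m\ne 0}z^m/|m|$ separately for $|\alpha|<1$ and $|\alpha|>1$) give the clean identity
\[
\sum_{j=1}^N e^{\pm 2\pi i n\theta_j}\ =\ -2|n|\,\widehat{\log|P|}(\mp n)\ +\ \mathrm{Err}_n,\qquad n\ne 0,
\]
with $|\mathrm{Err}_n|\le |n|\log\mathcal{M}(P)\le 2|n|h(P)$ (the error from replacing $\min(|\alpha_j|,|\alpha_j|^{-1})^{|n|}$ by $1$). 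Substituting into $\sum_j\phi(\theta_j)-N\int\phi=\sum_{n\ne 0}\hat\phi(n)\sum_j e^{2\pi i n\theta_j}$ and recognizing the Fourier multiplier $2|n|$ on $\hat\phi$, via Parseval, as pairing against the periodic Hilbert transform of $\phi'$, yields
\[
\sum_{j=1}^N\phi(\theta_j)-N\!\int_0^1\!\phi\ =\ -\frac{1}{\pi}\!\int_0^1 (H\phi')(\theta)\,\log\!\left|\frac{P(e^{2\pi i\theta})}{\sqrt{|a_0|}}\right|\d\theta\ +\ \mathcal{R},
\]
where the replacement $\log|P|\mapsto\log|P/\sqrt{|a_0|}|$ is free since $H\phi'$ has mean zero, and $|\mathcal{R}|=O(h(P)\sum_{n\ne 0}|n||\hat\phi(n)|)$. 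Jensen's formula combined with $\log=\log^+-\log^-$ gives $\int_0^1|\log|P/\sqrt{|a_0|}||\,\d\theta\le 2h(P)$, hence
\[
\left|\sum_{j=1}^N\phi(\theta_j)-N\!\int_0^1\phi\right|\ \le\ \frac{2}{\pi}\|H\phi'\|_\infty\, h(P)\ +\ |\mathcal{R}|.
\]

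\textbf{Step 2: Optimization and extremal problem.} Applying the above to a majorant $\phi\ge\one_I$ and a minorant $\psi\le\one_I$ and subtracting yields
\[
\big|N(I;P)-|I|N\big|\ \le\ \tfrac{2}{\pi}h(P)\cdot\max\!\big(\|H\phi'\|_\infty,\|H\psi'\|_\infty\big)\ +\ N\!\int_0^1(\phi-\psi)\ +\ |\mathcal{R}|.
\]
Parameterizing by a smoothing scale $\delta$ (the approximating polynomials having degree $\asymp 1/\delta$), one expects $\|H\phi'\|_\infty,\,\|H\psi'\|_\infty\asymp 1/\delta$ while $\int(\phi-\psi)\asymp\delta$; balancing via AM--GM gives
\[
\mathcal{D}(P)\ \le\ 2\sqrt{\mathcal{C}\cdot N\,h(P)},
\]
where $\mathcal{C}$ is the sharp value of the following extremal problem on $\R$ (obtained by rescaling $\delta\to 0$): minimize $\|HF'\|_{L^\infty(\R)}\cdot \int_\R(F-\one_{[-1/2,1/2]})\,\d x$ over real Paley--Wiener majorants $F\ge \one_{[-1/2,1/2]}$ of prescribed bandwidth. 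The target constant $4/\sqrt{\pi}$ in Theorem \ref{Cor2} corresponds to $\mathcal{C}=4/\pi$.

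\textbf{Step 3: Solving the extremal problem, and the main obstacle.} The core of the argument is the sharp resolution $\mathcal{C}=4/\pi$. I would proceed by Fourier duality: writing $F=\one_{[-1/2,1/2]}+G$ with $G\ge 0$ of controlled Fourier support, the problem becomes a constrained bilinear optimization of $\|HG'\|_\infty\cdot\int G$ under a positivity/boundary condition at $\pm 1/2$. A natural Beurling--Selberg-type candidate is one whose derivative's Hilbert transform attains its maximum exactly at the endpoints $\pm 1/2$ (the points where the majorant constraint is active); sharpness is then verified by pairing $G$ against a nonnegative dual measure supported at those extremal points and invoking a Paley--Wiener interpolation identity, the $1/\sqrt{\pi}$ suggesting a Gaussian/error-function profile after a quadratic change of variables. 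The main obstacle is this sharp resolution together with the simultaneous verification that no constant is lost elsewhere in the chain: any slack in Step 1 or in the AM--GM balancing of Step 2 would improve the constant, so the claim that $4/\sqrt{\pi}$ is the best possible with this strategy rests on matching sharpness across all three steps at once.
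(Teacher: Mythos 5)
Your Step 1 has a genuine gap that would cost a constant factor: you keep the roots at general $\rho_j$ and accumulate an error $|\mathcal{R}|=O\big(h(P)\sum_{n\ne 0}|n||\hat\phi(n)|\big)$, but this is \emph{not} lower order. For a smoothed indicator at scale $\delta$ one has $\sum_n|n||\hat\phi(n)|\asymp 1/\delta$, which is exactly the size of your main term $\tfrac{1}{\pi}\|\mathcal{H}(\phi')\|_\infty$. The paper avoids this entirely via Schur's observation: replacing $P(z)=\prod(z-\rho_je^{2\pi i\theta_j})$ by $Q(z)=\prod(z-e^{2\pi i\theta_j})$ only decreases $h$, so one may assume $\rho_j=1$ from the start, at which point the identity $\int e^{2\pi ik\theta}\log|P|\,\d\theta=-\tfrac{1}{2|k|}\sum_je^{2\pi ik\theta_j}$ is \emph{exact}, no $\mathcal R$ at all. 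A second gap is the rescaling: you assert that the relevant extremal constant is "obtained by rescaling $\delta\to 0$," but the periodic Hilbert transform of the periodized bump does not automatically shrink to its line analogue. Indeed, the paper exhibits functions for which $\|\mathcal H(f_F)\|_{L^\infty(\R/\Z)}>\|\mathcal H(F)\|_{L^\infty(\R)}$, so the supremum in $\delta$ can sit at $\delta=1$, not $\delta\to 0$. Controlling this requires the cotangent-series identity of Lemma \ref{Lem4} and a monotonicity argument (Proposition \ref{Prop_key}) showing that, after normalizing by $\delta$, the supremum is $\max\{\|\mathcal H(F)\|_{L^\infty(\R)},\|\mathcal H(f_F)\|_{L^\infty(\R/\Z)}\}$; without it your reduction to a problem on $\R$ is not justified.

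The deepest issue is Step 3: your guesses at the extremal structure do not match reality. The $\sqrt{\pi}$ has nothing to do with a Gaussian; it comes from the elementary factor $\sum_{k}2|k|\hat g(k)e^{2\pi ik\theta}=\tfrac{1}{\pi}\mathcal H(g')(\theta)$ already present before any optimization, and the extremal problem itself has answer $\mathbf{C}=1$. The lower bound is not a Paley--Wiener interpolation against point masses but a soft duality pairing $\int_\R\mathcal H(F)\,\overline{\mathfrak G}=-\int_\R F\,\overline{\mathcal H(\mathfrak G)}$ against the \emph{odd} algebraic function $\mathfrak G(x)=2x/\sqrt{1-4x^2}$, whose Hilbert transform is identically $-1$ on $(-\tfrac12,\tfrac12)$ and whose $L^1$-norm is $1$. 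The (near-)extremizer is the logarithmic, not Gaussian, profile $\mathfrak F(x)=\tfrac{2}{\pi}\log\big(\tfrac{1+\sqrt{1-4x^2}}{2|x|}\big)$, and uniqueness follows from Tricomi's classification of $L^2$-functions with vanishing finite Hilbert transform, which produces the arcsine density $c/\sqrt{1-4x^2}$ as the kernel. None of these ingredients appear in your proposal, and I do not see how the Gaussian/error-function ansatz or a two-point dual measure could produce them, so as written Step 3 is not a viable path to $\mathbf C=1$.
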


{\sc Remark}. In their original paper \cite{ET}, Erd\H{o}s and Tur\'an were also interested in estimating the number $\mathcal{R}(P)$ of real roots of a polynomial $P$. In particular, the notion of discrepancy can be used towards this goal. From the definition, letting $I$ denote either the point $0$ or $\frac{1}{2}$, it plainly follows that $\mathcal{R}(P) \le 2 \, \mathcal{D}(P)$.

We also note that, for the example $P(z) = (z-1)^N$, one has $\mathcal{D}(P) = N$ and 
\begin{equation}\label{20210302_10:20}
h(P) =  N \int_{0}^{1} \log^+\!\big|e^{2\pi i \theta} - 1\big| \,\d\theta = N \, \frac{ 3 \sqrt{3} \, L(2, \chi_3)}{4\pi},
\end{equation}
where $\chi_3$ denotes the quadratic character modulo $3$. This last identity was observed by C.~J.~Smyth in a slightly different context, see \cite[Appendix 1]{B}. Hence, the constant $C$ in \eqref{20210227_14:47} cannot be smaller than 
\begin{equation}\label{20210302_21:30}
\sqrt{\frac{4\pi}{3 \sqrt{3} \, L(2, \chi_3)}} = 1.75936\ldots.
\end{equation}

\smallskip

\subsection{Fourier optimization} Throughout this paper we consider functions in two different environments: the ones defined on $\R$ (usually denoted here with capital letters) and the ones defined on $\R/\Z$ (usually denoted here with lower case letters). 

\smallskip

For $F \in L^1(\R)$ we define its Fourier transform $\widehat{F}:\R \to \C$ by 
$$\widehat{F}(t) = \int_{-\infty}^{\infty} e^{- 2\pi i t x}\,F(x)\,\d x.$$
By Plancherel's theorem one can extend the Fourier transform to an isometry on $L^2(\R)$.  The Hilbert transform $\mathcal{H}$ is another classical operator in harmonic analysis that has a few (equivalent) interpretations. As a singular integral it is defined by 
 \begin{equation}\label{20210316_12:09}
 \mathcal{H}(F)(x) = {\rm p.v.}\,\frac{1}{\pi}  \int_{\R} F(x - t) \,\frac{1}{t}\,\d t\,,
 \end{equation}
where the notation p.v.~here means that such integral should be understood as a Cauchy principal value. The classical theory of singular integrals guarantees that the Hilbert transform is a well-defined operator on $L^p(\R)$ for $1 \leq p < \infty$, being a bounded operator if $1 < p < \infty$, and satisfying a weak-type-$(1,1)$ estimate when $p=1$. See, for instance \cite[Chapters V and VI]{SW} or \cite[Chapter 4]{Graf_book} for proofs of these facts and the connections with the theory of conjugate harmonic functions. In particular, the appropriate limiting process in \eqref{20210316_12:09} converges a.e.~for $F \in L^p(\R)$, $1 \leq p < \infty$. The operator $\mathcal{H}: L^2(\R) \to L^2(\R)$ is an isometry that can be alternatively defined on the Fourier space by the relation\footnote{Recall that ${\rm sgn}:\R \to \R$ is defined by ${\rm sgn}(t) =1$, if $t>0$; ${\rm sgn}(0) = 0$; and ${\rm sgn}(t) =-1$, if $t<0$.} 
\begin{equation}\label{20210301_09:24}
\widehat{\mathcal{H}(F)}(t) = -i \, {\rm sgn}(t) \, \widehat{F}(t).
\end{equation}

Similarly, in the periodic setting, if $f \in L^1(\R/\Z)$ we define its Fourier transform $\widehat{f}:\Z \to \C$ by 
$$\widehat{f}(k) =  \int_{\R/\Z} e^{- 2\pi i k \theta}\,f(\theta)\,\d \theta.$$
The periodic Hilbert transform is the singular integral operator defined by
\begin{equation}\label{20210316_12:14}
\mathcal{H}(f)(\theta) = {\rm p.v.}\, \int_{\R/\Z} f(\theta - \alpha) \,\cot(\pi \alpha)\,\d \alpha.
\end{equation}
Again, the appropriate limiting process in \eqref{20210316_12:14} converges a.e.~if $f \in L^p(\R/\Z)$ for $1 \leq p < \infty$, defining a bounded operator on $L^p(\R/\Z)$ if $1 < p < \infty$, and verifying a weak-type-$(1,1)$ estimate when $p=1$. In particular, $\mathcal{H}: L^2(\R/\Z) \to L^2(\R/\Z)$ can be alternatively defined via the Fourier coefficients
\begin{equation}\label{20210301_09:25}
\widehat{\mathcal{H}(f)}(k) = - i\, {\rm sgn}(k)\,\widehat{f}(k).
\end{equation}
Although we use the same notation for the Fourier transforms and Hilbert transforms on $\R$ and $\R/\Z$, it will be clear from the context which one we are referring to. We consider below some sharp inequalities for the Hilbert transform. Classical works in this theme include the ones of Pichorides \cite{Pi}, in which he finds the operator norm $\|\mathcal{H}\|_{L^p \to L^p}$ for $1 < p< \infty$ (see also \cite{Graf} for a simplified proof), and of Davis \cite{D}, in which he finds the weak-type-$(1,1)$ operator norm (such works consider both the situation in the real line and in the periodic setting).

\smallskip

Throughout the paper we let $\mathcal{A}$ be the following class of real-valued functions:
\begin{equation*}
\mathcal{A} = \left\{
\begin{array}{l}
F: \R \to \R  \ \   {\rm even, continuous\  and\  non\!-\!negative};\\
{\rm supp}(F) \subseteq [-\tfrac{1}{2},\tfrac{1}{2}] ; \\
\widehat{F} \in L^1(\R).
\end{array}
\right.
\end{equation*}
For each $F \in \mathcal{A}$ we define its periodization $f_F:\R/\Z \to \R$ by 
\begin{equation*}
f_F(\theta) := \sum_{k \in \Z} F(\theta + k).
\end{equation*}
One can verify that $f_F \in L^1(\R/\Z)$ and that $\widehat{f_F}(k) = \widehat{F}(k)$ for all $k \in \Z$. Moreover, in this situation, by a classical result of Plancherel and P\'{o}lya  
(see \cite{PP} or \cite[eq.~(3.1)]{V}),  for any $\delta > 0$ we have 
$$\sum_{k \in \Z} \big|\widehat{F}(\delta k)\big| \ll_{\delta} \big\| \widehat{F}\big\|_{L^1(\R)}.$$
In particular, for $F \in \mathcal{A}$, both $\mathcal{H}(F)$ defined by \eqref{20210301_09:24} and $\mathcal{H}(f_F)$ defined by \eqref{20210301_09:25} via Fourier inversion are bounded and continuous functions. We consider the following optimization problem involving the $L^{\infty}$-norms of these Hilbert transforms.

\smallskip

\noindent {\it Extremal Problem 1 {\rm (EP1)}.} 
With notations as above, find the infimum:
\begin{align}\label{20210222_08:46}
{\bf C} := \inf_{0 \neq F \in \mathcal{A}} \frac{\max\big\{\|\mathcal{H}(F)\|_{L^{\infty}(\R)}\,,\,\|\mathcal{H}(f_F)\|_{L^{\infty}(\R/\Z)}\big\}  }{\|F\|_{L^{1}(\R)}}. 
\end{align}

\smallskip

This problem is the main theme of study in this paper. Without necessarily knowing the precise value of the constant ${\bf C}$, our first main result gives a non-obvious theoretical connection between this optimization problem, purely in analysis, and the angular discrepancy $\mathcal{D}(P)$ of a polynomial $P$.

\begin{theorem}\label{Thm1}
Let ${\bf C}$ be given by \eqref{20210222_08:46}. If $P$ is a monic polynomial of degree $N$ with $P(0)\ne 0$, then 
$$\mathcal{D}(P) \le  \frac{4\sqrt{\bf C}}{\sqrt{\pi}}  \, \sqrt{ N \,h(P)}.$$
\end{theorem}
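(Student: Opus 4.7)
I follow the framework of Soundararajan \cite{S}, but keep the test function free throughout so that the extremal constant $\mathbf{C}$ from (EP1) emerges at the end. Fix an arc $I \subseteq \R/\Z$ and a function $F \in \mathcal{A}$ with $\|F\|_{L^1(\R)} = 1$, introduce a scaling parameter $\delta \in (0,1)$, and set $F_\delta(x) := \delta^{-1}F(x/\delta) \in \mathcal{A}$. Via Selberg-type majorants and minorants of $\one_I$ built from the periodic convolutions $\one_{I^\pm} *_{\R/\Z} f_{F_\delta}$, where $I^\pm$ is the arc $I$ expanded or shrunk by $\delta$, the task reduces to bounding a boundary contribution of size $O(N\delta)$ and controlling, uniformly in $\varphi \in \R/\Z$,
\begin{equation*}
\Delta_\varphi \, := \, \sum_{j=1}^N f_{F_\delta}(\theta_j - \varphi) - N \widehat{F}(0).
\end{equation*}

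The central analytic input is the classical Fourier expansion of $\log \big|e^{2\pi i \theta} - \alpha\big|$ on the circle, which gives, for $n \neq 0$,
\begin{equation*}
\widehat{\log|P|}(n) = -\frac{1}{2|n|}\sum_{j=1}^N \min(\rho_j, 1/\rho_j)^{|n|} e^{-2\pi i n \theta_j}.
\end{equation*}
Writing $S_n := \sum_j e^{2\pi i n \theta_j}$ and $T_n := \sum_j \min(\rho_j, 1/\rho_j)^{|n|} e^{2\pi i n \theta_j}$, the Fourier expansion of $\Delta_\varphi$ decomposes according to $S_n = T_n + (S_n - T_n)$. Using the factorization $-2|n| = \pi^{-1}(2\pi i n)(-i\,\mathrm{sgn}(n))$, which matches the Fourier multipliers of $\pi^{-1}\partial_\theta \circ \mathcal{H}$, the $T_n$-contribution collapses to an integral against $\log|P|$ whose kernel involves the \emph{periodic} Hilbert transform of $f_{F_\delta}$; a Parseval-duality/integration-by-parts argument (valid generically in $P$ and extended by continuity) gives
\begin{equation*}
\big|(T_n\text{-contribution})\big| \ \leq \ C_1\, \|\mathcal{H}(f_{F_\delta})\|_{L^\infty(\R/\Z)} \cdot \int_0^1 \bigg|\log\frac{|P(e^{2\pi i \theta})|}{\sqrt{|a_0|}}\bigg|\,d\theta \ \leq \ 2 C_1\, h(P)\, \|\mathcal{H}(f_{F_\delta})\|_{L^\infty(\R/\Z)}.
\end{equation*}
The residual $(S_n - T_n)$-contribution, rewritten via $1 - r^{|n|} = |n|\int_r^1 s^{|n|-1}\,ds$ and unfurled as an integral over radii, exhibits the \emph{same} Fourier-analytic structure but on $\R$ rather than $\R/\Z$; it is bounded by $C_2\, \|\mathcal{H}(F_\delta)\|_{L^\infty(\R)} \cdot \log \mathcal{M}(P) \leq 2 C_2\, h(P)\, \|\mathcal{H}(F_\delta)\|_{L^\infty(\R)}$. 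This is the key new step beyond \cite{S}, and it forces both norms of (EP1) to appear on the same footing.

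Collecting terms and using the scalings $\|\mathcal{H}(F_\delta)\|_\infty = \delta^{-1}\|\mathcal{H}(F)\|_\infty$, together with its periodic analog (justified via Poisson summation and the expansion $\cot(\pi\alpha) = (\pi\alpha)^{-1} + O(\alpha)$), one obtains
\begin{equation*}
|N(I;P) - |I|N| \leq c_1 N \delta + \frac{c_2\, h(P)}{\delta}\, \max\!\big\{\|\mathcal{H}(f_F)\|_{L^\infty(\R/\Z)},\ \|\mathcal{H}(F)\|_{L^\infty(\R)}\big\},
\end{equation*}
for explicit numerical constants $c_1, c_2$ with $c_1 c_2 = 4/\pi$ (the $\pi^{-1}$-factors originating in the normalizations of \eqref{20210316_12:09} and \eqref{20210316_12:14}). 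Taking the infimum over $F \in \mathcal{A}$ replaces the maximum by $\mathbf{C}\|F\|_{L^1} = \mathbf{C}$, and minimizing in $\delta$ yields $\mathcal{D}(P) \leq 2\sqrt{c_1 c_2\, \mathbf{C}\, N\, h(P)} = \frac{4\sqrt{\mathbf{C}}}{\sqrt{\pi}}\sqrt{N\, h(P)}$. The principal obstacle is the rigorous identification that the error $S_n - T_n$ is governed precisely by the \emph{non-periodic} Hilbert transform $\|\mathcal{H}(F_\delta)\|_{L^\infty(\R)}$, so that both norms in (EP1) arise from the architecture of the proof rather than from a loose majorization — this is exactly where the paper's novel ingredient lies.
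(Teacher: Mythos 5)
Your proposal departs structurally from the paper's proof in a way that creates two genuine gaps.

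First, the paper does \emph{not} keep the radial factors $\min(\rho_j,1/\rho_j)^{|n|}$ in play and then split $S_n = T_n + (S_n - T_n)$. Instead it invokes Schur's observation at the outset to reduce to a polynomial $Q$ with all zeros on the unit circle and $h(Q)\le h(P)$; after that reduction the Fourier coefficients of $\log|P|$ are exactly $-\tfrac{1}{2|n|}\sum_j e^{-2\pi i n\theta_j}$ and there is a single term, not two. Your claim that the residual $(S_n - T_n)$-contribution ``exhibits the same Fourier-analytic structure but on $\R$ rather than $\R/\Z$'' and is governed by $\|\mathcal{H}(F_\delta)\|_{L^\infty(\R)}$ is unsupported and, as far as I can tell, incorrect. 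Unwinding $1 - r^{|n|} = |n|\int_r^1 s^{|n|-1}\,ds$ and pairing with the $\widehat{\chi_{I_\delta}}(n)\sim 1/n$ factor produces the \emph{conjugate Poisson integral} of the periodized function $f_{F_\delta}$ evaluated at radius $s<1$, which by the maximum principle is pointwise dominated by $\|\mathcal{H}(f_{F_\delta})\|_{L^\infty(\R/\Z)}$ — a periodic object again, not $\|\mathcal{H}(F_\delta)\|_{L^\infty(\R)}$. You would need to explain how a genuinely periodic kernel suddenly becomes the Hilbert transform on the line; I do not see it, and the paper never attempts this.

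Second, and more fundamentally, the paper's mechanism for making the \emph{non-periodic} Hilbert transform appear is entirely absent from your sketch. After reducing to the periodic bound $\mathcal{G}_\delta \le \tfrac{2}{\pi}\|\mathcal{H}(f_\delta)\|_{L^\infty(\R/\Z)}$, the crux is Proposition \ref{Prop_key}:
\begin{equation*}
\sup_{0 < \delta \leq 1} \delta\,\|\mathcal{H}(f_{\delta})\|_{L^{\infty}(\R/\Z)} = \max\bigl\{\|\mathcal{H}(F)\|_{L^{\infty}(\R)},\ \|\mathcal{H}(f_F)\|_{L^{\infty}(\R/\Z)}\bigr\},
\end{equation*}
whose proof requires the cotangent expansion \eqref{20210302_13:48}, a monotonicity analysis in the parameter, and identification of $\|\mathcal{H}(F)\|_{L^\infty(\R)}$ as the $\delta\to 0^+$ endpoint via dominated convergence. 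Your sketch replaces this with the assertion that the scaling law $\|\mathcal{H}(F_\delta)\|_\infty = \delta^{-1}\|\mathcal{H}(F)\|_\infty$ has a ``periodic analog.'' That analog is \emph{false}: the periodic Hilbert transform does not respect dilations, and $\delta\|\mathcal{H}(f_\delta)\|_{L^\infty(\R/\Z)}$ is genuinely non-constant in $\delta$ — indeed the whole content of Proposition \ref{Prop_key} is to pin down its supremum. Without that proposition, the quantity $\max\{\|\mathcal{H}(f_F)\|_{L^\infty(\R/\Z)}, \|\mathcal{H}(F)\|_{L^\infty(\R)}\}$ does not emerge from your bookkeeping; at best you would obtain a sum of two error terms (a periodic HT and a conjugate-Poisson term, both periodic), with an uncontrolled $\delta$-dependence in the first one. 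You should restore Schur's reduction and then prove (or cite) Proposition \ref{Prop_key}; that is where the actual work of Theorem \ref{Thm1} lies.
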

We prove this result in Sections \ref{Sound_Sec} and \ref{Max_HT_Sec}. From the observations leading to \eqref{20210302_21:30} and Theorem \ref{Thm1} we automatically have a lower bound coming from the number theory side:
$$ {\bf C} \geq  \frac{\pi}{16}\left(\frac{4\pi}{3 \sqrt{3} \, L(2, \chi_3)}\right) = 0.6077\ldots.$$
In Theorem 2, we go much further in our understanding of this problem. Before stating this result, we set up a second optimization problem, somewhat related to the first one. Let $\mathcal{A}^*$ be the following class of real-valued functions (slightly larger than $\mathcal{A}$):
\begin{equation*}
\mathcal{A^*} = \left\{
\begin{array}{l}
F \in L^1(\R) ,\ \   F \geq 0;\\
{\rm supp}(F) \subseteq [-\tfrac{1}{2},\tfrac{1}{2}].
\end{array}
\right.
\end{equation*}
Consider the following problem:
\smallskip

\noindent {\it Extremal Problem 2 {\rm (EP2)}.} 
With notations as above, find the infimum:
\begin{align}\label{20210315_08:46}
{\bf C^*} := \inf_{0 \neq F \in \mathcal{A^*}} \frac{\|\mathcal{H}(F)\|_{L^{\infty}(\R)} }{\|F\|_{L^{1}(\R)}}. 
\end{align}

\smallskip

Since $\mathcal{A} \subseteq \mathcal{A^*}$ and $\|\mathcal{H}(F)\|_{L^{\infty}(\R)} \le \max\big\{\|\mathcal{H}(F)\|_{L^{\infty}(\R)}\,,\,\|\mathcal{H}(f_F)\|_{L^{\infty}(\R/\Z)}\big\} $, it is clear from the definitions of (EP1) and (EP2) that 
${\bf C^*}  \leq {\bf C}.$
Our second main result establishes a complete solution for both of these extremal problems at once.

\begin{theorem}\label{Thm2}
For ${\bf C}$ given by \eqref{20210222_08:46} and ${\bf C^*}$ given by \eqref{20210315_08:46}, we have
$${\bf C^*} = {\bf C} =1.$$
Moreover, there are no extremal functions $F \in \mathcal{A}$ for the problem ${\rm (EP1)}$, and the unique \textup{(}modulo multiplication by a positive constant\textup{)} extremal function for the problem ${\rm (EP2)}$ is 
\begin{equation}\label{20210315_14:00}
\ \ \ \ \ \ \ \ \ \ \ \ \ \ \qquad \qquad \frak{F}(x) := \frac{2}{\pi}\log\left( \frac{1 + \sqrt{1 - 4x^2}}{2|x|}\right) \ \ \ \ \ \big({\rm for}\ -\tfrac12 \leq x \leq \tfrac12\big).
\end{equation}
\end{theorem}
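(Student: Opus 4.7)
The strategy is to establish the chain $1 \leq \mathbf{C}^* \leq \mathbf{C} \leq 1$ (the middle inequality being immediate from $\mathcal{A}\subseteq\mathcal{A}^*$), identify $\mathfrak{F}$ as the unique (EP2)-extremal via a Riemann--Hilbert argument, and handle (EP1) by approximation together with a non-existence argument. For the lower bound $\mathbf{C}^* \geq 1$, the plan is to use a sharp dual pairing: let $h(x):=-2x/\sqrt{1-4x^2}$ on $(-1/2,1/2)$, zero elsewhere. A direct substitution gives $\|h\|_{L^1(\R)}=1$, and the classical Tricomi/Chebyshev finite-Hilbert identity $\frac{1}{\pi}\,\mathrm{p.v.}\!\int_{-1}^1\frac{u\,du}{(\alpha-u)\sqrt{1-u^2}}=-1$ for $|\alpha|<1$ (after the rescaling $u=2x$) yields $\mathcal{H}(h)\equiv 1$ on $(-1/2,1/2)$. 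Justifying the antisymmetry $\int h\,\mathcal{H}(F)=-\int F\,\mathcal{H}(h)$ via $L^2$-truncations of $h$ (its endpoint singularities at $\pm 1/2$ are integrable) leads to
\[
\int_\R h(x)\,\mathcal{H}(F)(x)\,dx = -\|F\|_{L^1(\R)} \qquad (F\in\mathcal{A}^*),
\]
and H\"older's inequality then gives $\|F\|_{L^1}\leq\|h\|_{L^1}\|\mathcal{H}(F)\|_{L^\infty}=\|\mathcal{H}(F)\|_{L^\infty}$.

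For the matching upper bound I would verify that $\mathfrak{F}$ realizes the ratio $1$. The normalization $\|\mathfrak{F}\|_{L^1(\R)}=1$ follows from $\int_0^{\pi/2}\log\cot(\theta/2)\cos\theta\,d\theta=\pi/2$ (via $2x=\sin\theta$ and integration by parts). To compute $\mathcal{H}(\mathfrak{F})$, I would write $\mathfrak{F}(s)=\frac{2}{\pi}\int_{|s|}^{1/2}\frac{dt}{t\sqrt{1-4t^2}}$, swap the order of integration in $\pi\mathcal{H}(\mathfrak{F})(x)=\mathrm{p.v.}\int_{-1/2}^{1/2}\mathfrak{F}(s)/(x-s)\,ds$ so the inner integral collapses to $\log|(x+t)/(x-t)|$, and substitute $2t=\sin\alpha$. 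Elementary trigonometric integration then produces $\mathcal{H}(\mathfrak{F})(x)=(2/\pi)\arcsin(1/(2x))$ for $|x|>1/2$ and $\mathcal{H}(\mathfrak{F})(x)=\mathrm{sgn}(x)$ for $|x|<1/2$, so $\|\mathcal{H}(\mathfrak{F})\|_{L^\infty}=1$ and $\mathbf{C}^*\leq 1$. For uniqueness in (EP2), equality in the H\"older step forces $\mathcal{H}(F)(x)=\|F\|_{L^1}\,\mathrm{sgn}(x)$ on $(-1/2,1/2)$; prescribing this real part on the slit for the Cauchy transform $\Phi(z)=\int F(s)(z-s)^{-1}\,ds$, together with Schwarz reflection symmetry and the asymptotic $z\Phi(z)\to\|F\|_{L^1}$ at infinity, solves the resulting Riemann--Hilbert problem as $\Phi(z)=2\|F\|_{L^1}\arcsin(1/(2z))$, and reading off the Plemelj jump $\mathrm{Im}\,\Phi(x+i0)=-\pi F(x)$ identifies $F$ as a positive multiple of $\mathfrak{F}$.

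Since $\mathfrak{F}$ has a logarithmic blow-up at $0$ (and $\widehat{\mathfrak{F}}(t)\sim 1/(\pi t)\notin L^1(\R)$), we have $\mathfrak{F}\notin\mathcal{A}$, so by the uniqueness above no extremizer for (EP1) can lie in $\mathcal{A}$. Nevertheless $\mathbf{C}\leq 1$ is established by regularization: cap the log-singularity of $\mathfrak{F}$, multiply by a smooth cutoff vanishing near $\pm 1/2$, and convolve with a Schwartz bump to obtain $F_n\in\mathcal{A}$ with $\|F_n\|_{L^1}\to 1$ and $\|\mathcal{H}(F_n)\|_{L^\infty(\R)}\to 1$; since the smooth bounded function $\cot(\pi\alpha)-1/(\pi\alpha)$ controls the difference between the periodic and Euclidean Hilbert kernels on compact subsets of $(-1/2,1/2)$ and $\mathrm{supp}(F_n)$ stays strictly inside that interval, one also obtains $\|\mathcal{H}(f_{F_n})\|_{L^\infty(\R/\Z)}\to 1$. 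The most delicate point I anticipate is the explicit evaluation of $\mathcal{H}(\mathfrak{F})$ on its support: na\"ive differentiation of the parameter-dependent p.v.\ integral produces zero, yet the correct value is a nonzero constant (with a discontinuity at the parameter's boundary point $\psi=0$), and matching the inside/outside formulas requires selecting the correct branch of $\arcsin$.
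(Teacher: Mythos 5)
Your overall architecture matches the paper's: a sharp duality pairing against the odd function $-2x/\sqrt{1-4x^2}$ (the paper's $\frak{G}$) gives the lower bound, the explicit extremal $\frak{F}$ with $\|\mathcal{H}(\frak{F})\|_{L^\infty}=1$ is the same, regularization handles $\mathbf{C}\le 1$, and non-existence in $\mathcal{A}$ follows from the equality condition in the pairing. Your Riemann--Hilbert derivation of the extremizer is a genuinely different route from the paper's (which cites Tricomi and Coifman--Steinerberger for the kernel of the finite Hilbert transform); if carried through carefully, it recomputes the same fact and is a reasonable alternative, though you still need to isolate the homogeneous solution $c/\sqrt{1-4x^2}$ (which the asymptotic condition $z\Phi(z)\to\|F\|_{L^1}$ does kill, but one should also say why no worse endpoint blow-up is admissible, and then fix $c$ via $\int F=\int\frak{F}$, as the paper does).

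There are two genuine gaps. First, to run the duality pairing $\int \mathcal{H}(F)\,\overline{h}=-\int F\,\overline{\mathcal{H}(h)}$ for $F\in\mathcal{A}^*$, the problem is not the endpoint singularity of $h$ but the fact that $F$ is a priori only in $L^1$. Your ``$L^2$-truncations of $h$'' do not address this: the truncated pairing still requires $F$ to lie in some $L^{q'}$ with $q'>1$, and passing to the limit also needs control. The paper handles this by first proving that $F\in L^1$, compactly supported, with $\mathcal{H}(F)\in L^\infty$ necessarily lies in $L^p(\R)$ for all $1\le p<\infty$ (via the Calder\'on--Capri lemma on $\mathcal{H}(F*\varphi_\varepsilon)=\mathcal{H}(F)*\varphi_\varepsilon$ and $\mathcal{H}^2=-I$, or via the $H^1(\R)$ space). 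Without some version of this, neither the inequality $\mathbf{C}^*\ge 1$ nor the equality/uniqueness analysis for (EP2) is justified. Second, your argument that $\|\mathcal{H}(f_{F_n})\|_{L^\infty(\R/\Z)}\to 1$ does not go through as stated: the kernel difference $\cot(\pi\alpha)-1/(\pi\alpha)$ is merely \emph{bounded} on $(-\tfrac12,\tfrac12)$, so convolving it against a sequence with $\|F_n\|_{L^1}=1$ produces an $O(1)$ error, not $o(1)$, and the estimate $\|\mathcal{H}(f_{F_n})\|_\infty\le\|\mathcal{H}(F_n)\|_\infty+O(1)$ is useless. The paper avoids this entirely by choosing the approximants $F^\varepsilon=\frak{F}_{1-\varepsilon}*\varphi_\varepsilon$ to be \emph{radial decreasing}: the sign analysis in Section~\ref{Sec4_Numerics} (via Lemma~\ref{Lem4}) then shows that for radial decreasing $F$ the periodic norm never exceeds the real-line norm, i.e.\ $\mathcal{C}(F^\varepsilon)=\|\mathcal{H}(F^\varepsilon)\|_{L^\infty(\R)}=1/(1-\varepsilon)$, with no residual $O(1)$ term to control. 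Your regularization (cap, cut off near $\pm\tfrac12$, convolve) will generally destroy radial monotonicity, so you would need to replace that step by the paper's construction or supply a sign argument of your own.
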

The proof of this theorem is given in Section \ref{Magic}. The proof relies on the serendipitous existence of two {\it magic functions}:~the even function given in \eqref{20210315_14:00} and an odd function given in \eqref{20210316_14:36}. For a different perspective on lower bounds for the Hilbert transform over intervals (mostly in $L^2$), see \cite{APS}.

\smallskip

Theorem \ref{Cor2} now follows directly from Theorems \ref{Thm1} and \ref{Thm2}.

\section{Soundararajan's proof revisited}\label{Sound_Sec}

We now prepare for the proof of Theorem  \ref{Thm1}. At first, we closely follow Soundararajan's strategy of proof for the inequality  \eqref{20210227_14:47} in \cite{S}, which we briefly review for the convenience of the reader. At a certain stage of the argument (discussed in \S \ref{Ste4} below), we make a crucial change of direction that leads to our optimization problem in analysis. This is discussed in full detail in the next section, where we complete the proof of Theorem \ref{Thm1}.

\subsection{Schur's observation} \label{Sec_Schur} First note that we can assume without loss of generality that the zeros of the polynomial are all in the unit circle, an observation due to Schur \cite{Schur}. In fact, letting $P(z) = \prod_{j=1}^N \big( z-\rho_j \,e^{2\pi i\theta_j}\big)$ as above, we may define $Q(z) = \prod_{j=1}^N \big( z- e^{2\pi i\theta_j}\big)$ and observe, for $|z| =1$, that
$$\left| \frac{z}{\sqrt{\rho_j}} - \sqrt{\rho_j}\,e^{2\pi i \theta_j} \right|^2 \geq \big|z - e^{2\pi i \theta_j}\big|^2.$$
By multiplying over $j$, we find that $|P(z)|/\sqrt{|a_0|} \geq |Q(z)|$ for $|z| =1$, and therefore $h(P) \geq h(Q)$. Hence, from now on we assume that $\rho_j =1$ for $j = 1, 2, \ldots, N$. 

\subsection{Smoothed sums and $h(P)$} \label{Sec_Smooth} If we define $\psi(\theta) = \log|2 \sin(\pi \theta)|$, then its Fourier coefficients are given by $\widehat{\psi}(0) = 0$ and $\widehat{\psi}(k) = - \tfrac{1}{2|k|}$ for $k \neq 0$ (e.g.~\cite[\S 1.441, eq.~2]{GR}). Hence, for $P(z) = \prod_{j=1}^N \big( z-e^{2\pi i\theta_j}\big)$ and $k \in \Z \setminus \{0\}$, we have
\begin{align}\label{20210301_16:55}
\begin{split}
\int_{\R/\Z} e^{2 \pi i k \theta} & \log \big| P\big(e^{2\pi i \theta}\big) \big|\,\d\theta = \sum_{j =1}^N \int_{\R/\Z}e^{2 \pi i k \theta}  \log \big| e^{2\pi i \theta} - e^{2\pi i \theta_j}\big|\,\d\theta  \\
& = \sum_{j =1}^N e^{2 \pi i k \theta_j} \int_{\R/\Z} e^{2 \pi i k \alpha}  \log \big| e^{2\pi i \alpha} - 1\big|\,\d\alpha \\
& = \sum_{j =1}^N e^{2 \pi i k \theta_j}  \int_{\R/\Z} \psi(\alpha) \, e^{2 \pi i k \alpha} \,\d\alpha = - \frac{1}{2|k|}\sum_{j =1}^N e^{2 \pi i k \theta_j}.
\end{split}
\end{align}
Identity \eqref{20210301_16:55} is essentially contained in \cite[Lemma 2]{S}. Let $g: \R/\Z \to \C$ be a continuous and integrable function such that $\{k \, \widehat{g}(k)\}_{k \in \Z}$ is absolutely summable, and set 
$$\mathcal{G} := \max_{\theta} \left|\sum_{k \neq 0} 2 |k| \, \widehat{g}(k)\,  e^{2 \pi i k \theta}\right|.$$
By expanding $g$ into its Fourier series, and using \eqref{20210301_16:55}, we get
\begin{align}\label{20210301_17:46}
& \left| \sum_{j=1}^N g(\theta_j)  - N \int_{\R/\Z} g(\theta)\,\d\theta \right|  =  \left|\sum_{k \neq 0} \widehat{g}(k)  \sum_{j=1}^N  e^{2 \pi i k \theta_j}\right|   =  \left|- \int_{\R/\Z}  \log \big| P\big(e^{2\pi i \theta}\big) \big| \left(\sum_{k \neq 0} 2 |k| \widehat{g}(k) e^{2 \pi i k \theta}\right) \d\theta\right| \nonumber \\
& \ \ \ \ \leq \mathcal{G}\int_{\R/\Z}  \Big|\log \big| P\big(e^{2\pi i \theta}\big) \big| \Big| \, \d\theta = \mathcal{G} \left( \int_{\R/\Z} 2\log^+\!\big| P\big(e^{2\pi i \theta}\big) \big|\,\d\theta - \int_{\R/\Z} \log\big| P\big(e^{2\pi i \theta}\big) \big|\,\d\theta\right) \\
& \ \ \ \ = 2 \,\mathcal{G}\, h(P). \nonumber
\end{align}
In the last passage above, note the use of Jensen's formula in the identity
$$\int_{\R/\Z} \log\big| P\big(e^{2\pi i \theta}\big) \big|\,\d\theta = 0.$$
Inequality \eqref{20210301_17:46} is the content of \cite[Proposition 1]{S}.

\subsection{Majorizing the characteristic function of an interval} Having established the preliminaries in \S \ref{Sec_Schur} and \S \ref{Sec_Smooth} above, we now move on to the proof itself. First observe that if we can prove the upper bound
\begin{align}\label{20210302_08:13}
N(I; P) - |I|\,N \leq C\sqrt{N \,h(P)}\,,
\end{align}
for a certain universal constant $C$ and all intervals $I \subset \R/\Z$, we may use the identity
$$N(I; P) - |I|\, N  = |I^c|\, N - N(I^c; P),$$ 
where $I^c$ denotes the complementary interval to $I$, to obtain the corresponding lower bound. Therefore, it suffices to obtain the upper bound \eqref{20210302_08:13}.

\smallskip

Let $0 \neq F \in \mathcal{A}$, normalized so that $\|F\|_{L^1(\R)} = \int_{\R} F(x)\,\dx = 1$.  For each $0 < \delta \leq 1$, let 
$$F_{\delta}(x) := \tfrac{1}{\delta} F\big( \tfrac{x}{\delta}\big)$$ 
so that ${\rm supp}(F_{\delta}) \subset \big[-\tfrac{\delta}{2}, \tfrac{\delta}{2}\big]$. We let 
$$f_\delta(\theta) := \sum_{k \in \Z} F_{\delta}(\theta + k)$$
be the periodization of $F_{\delta}$. Note that $\int_{\R/\Z} f_\delta(\theta) \,\d\theta = 1$ and, more generally, that 
\[
\widehat{f_{\delta}}(k) = \widehat{F_{\delta}}(k) = \widehat{F}(\delta k)
\] 
for all $k \in \Z$. For each interval $I \subset \R/\Z$, let $I_{\delta}$ be the interval obtained by widening $I$ on either side by $\delta/2$; if $|I| + \delta \geq 1$, then we just consider $I_{\delta}$ to be all of $\R/\Z$. Let $\chi_{I_\delta}$ be the characteristic function of the interval $I_{\delta}$ and let $g_\delta$ be the convolution of $\chi_{I_\delta}$ and $f_{\delta}$, that is
\begin{align}\label{20210221_17:10}
g_\delta(\theta) =  \int_{\R/\Z} \chi_{I_\delta}(\alpha) \, f_{\delta}(\theta - \alpha)\,\d\alpha.
\end{align}
Note that $g_\delta$ is a continuous and non-negative function that majorizes the characteristic function of the original interval $I$. We then write
\begin{align}\label{20210221_17:09}
N(I; P) - |I|\, N  \leq \sum_{j=1}^N g_\delta(\theta_j) - |I|\,N =\left(\sum_{j=1}^N g_\delta(\theta_j)  - N \int_{\R/\Z} g_\delta(\theta)\,\d\theta \!\right) \!+ N \!\left(\int_{\R/\Z}  g_\delta(\theta)\,\d\theta -  |I|\!\right).
\end{align}

Our goal now is to bound the two terms appearing on the right-hand side of \eqref{20210221_17:09}. For the second term, we use the definition \eqref{20210221_17:10} and Fubini's theorem to get
\begin{align}\label{20210222_08:52}
0 \leq N \left(\int_{\R/\Z}  g_\delta(\theta)\,\d\theta -  |I|\right) =N \left(  \int_{\R/\Z} \chi_{I_\delta}(\alpha) \,\d\alpha -  |I|\right) \leq N\Big((|I| + \delta) - |I|\Big) = N\delta.
\end{align}
Now, if $I_\delta = [\alpha, \beta]$, for all $k \in \Z \setminus\{0\}$ we have
\begin{equation}\label{20210301_17:54}
\widehat{\chi_{I_\delta}}(k) = \frac{e^{-2 \pi i k \alpha} - e^{-2 \pi i k \beta}}{2\pi i k}.
\end{equation}
Recall that $\widehat{g_\delta}(k) = \widehat{\chi_{I_\delta}}(k)\,\widehat{f_{\delta}}(k)$ for all $k \in \Z$, hence the sequence $\{k\widehat{g_\delta}(k)\}_{k \in \Z}$ is absolutely summable. Letting 
$$\mathcal{G}_{\delta} := \max_{\theta} \left|\sum_{k \neq 0} 2 |k| \widehat{g_\delta}(k) e^{2 \pi i k \theta}\right|\,,$$
we have seen in \eqref{20210301_17:46} that the first term on the right-hand side of \eqref{20210221_17:09} satisfies
\begin{align}\label{20210301_18:24}
\left| \sum_{j=1}^N g_{\delta}(\theta_j)  - N \int_{\R/\Z} g_{\delta}(\theta)\,\d\theta \right| \leq 2 \,\mathcal{G}_{\delta}\, h(P).
\end{align}

\subsection{Understanding the cancellation} \label{Ste4} We now need to bound the quantity $\mathcal{G}_{\delta}$ and this is where we diverge from Soundararajan's original proof \cite{S}. From \eqref{20210301_17:54} we have
\begin{align}\label{20210302_08:21}
\sum_{k \neq 0} 2 |k| \widehat{g_\delta}(k) \,e^{2 \pi i k \theta} = \frac{1}{\pi} \sum_{k \neq 0} - i \, {\rm sgn}(k)\widehat{f_{\delta}}(k) \left(e^{2 \pi i k (\theta - \alpha)} - e^{2 \pi i k (\theta - \beta)}\right),
\end{align}
and hence 
\begin{align*}
\left| \sum_{k \neq 0} 2 |k| \widehat{g_\delta}(k) \,e^{2 \pi i k \theta}\right|  & \leq  \frac{1}{\pi} \left| \sum_{k \neq 0} - i \, {\rm sgn}(k)\widehat{f_{\delta}}(k)\, e^{2 \pi i k (\theta - \alpha)}\right|  + \frac{1}{\pi} \left| \sum_{k \neq 0} - i \, {\rm sgn}(k)\widehat{f_{\delta}}(k) \,e^{2 \pi i k (\theta - \beta)}\right|\\
& = \frac{1}{\pi} \big| \mathcal{H}(f_{\delta})(\theta - \alpha)\big| + \frac{1}{\pi} \big| \mathcal{H}(f_{\delta})(\theta - \beta)\big|.
\end{align*}
This plainly yields
\begin{equation}\label{20210301_18:21}
\mathcal{G}_{\delta} \leq \frac{2}{\pi} \|\mathcal{H}(f_{\delta})\|_{L^{\infty}(\R/\Z)}.
\end{equation}
Equality is actually attained if one considers the maximum over all intervals $[\alpha,\beta]$, so there is no loss in this use of the triangle inequality.

\smallskip

{\sc Remark.} In the corresponding step in \cite{S}, Soundararajan is working in the restricted subclass of $\mathcal{A}$ for which $\widehat{F} \geq 0$, and at the end he chooses $F$ to be a triangular graph. He couples the terms $k$ and $-k$ in \eqref{20210302_08:21} and uses the triangle inequality, further moving the absolute values inside the sum, to get 
$$\mathcal{G}_{\delta} \leq \frac{4}{\pi} \max_{\theta} \sum_{k \geq 1}  \widehat{f_\delta}(k) \,|\sin(2 \pi  k \theta)|.$$ 
This particular extra step of moving the absolute values inside disregards some cancellation in the sum. This is precisely the point where our analysis diverges from \cite{S}.

\smallskip

We now state a relation that is fundamental for our purposes, which essentially says that the supremum over this one-parameter family (for $0 < \delta \leq 1$) of $L^{\infty}$-norms of Hilbert transforms in \eqref{20210301_18:21}, when properly normalized, occurs at one of the endpoints $\delta = 0^+$ or $\delta = 1$.

\begin{proposition}\label{Prop_key}
Let $F \in \mathcal{A}$ and $0 < \delta \leq 1$. With notations as above, we have
\begin{align*}
\sup_{0 < \delta \leq 1} \delta \,  \|\mathcal{H}(f_{\delta})\|_{L^{\infty}(\R/\Z)} = \max\!\left\{\|\mathcal{H}(F)\|_{L^{\infty}(\R)}\,,\,\|\mathcal{H}(f_F)\|_{L^{\infty}(\R/\Z)}\right\}.
\end{align*}
\end{proposition}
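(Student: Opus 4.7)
The plan is to recast $\delta\,\mathcal{H}(f_\delta)(\theta)$ as a single lattice sum of values of the real-line Hilbert transform $\mathcal{H}(F)$, and then to extract both inequalities from that formula.

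First I would verify, by matching Fourier coefficients, that the periodic Hilbert transform of $f_\delta$ is the $1$-periodization of the real-line Hilbert transform of $F_\delta$, i.e., $\mathcal{H}(f_\delta)(\theta)=\sum_{m\in\Z}\mathcal{H}(F_\delta)(\theta+m)$. Both sides have $k$-th Fourier coefficient $-i\,\mathrm{sgn}(k)\,\widehat{F}(\delta k)$, and the absolute summability needed to justify the periodization is granted by $\widehat{F}\in L^1(\R)$ together with the Plancherel--P\'olya bound quoted in the excerpt. Combining this with the scaling identity $\mathcal{H}(F_\delta)(x)=\delta^{-1}\mathcal{H}(F)(x/\delta)$ produces the central formula
\[
\delta\,\mathcal{H}(f_\delta)(\theta)\,=\,\sum_{m\in\Z}\mathcal{H}(F)\!\left(\frac{\theta+m}{\delta}\right).
\]

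From this identity the direction ``$\sup\geq\max$'' is essentially immediate. At $\delta=1$ the right-hand side is literally $\mathcal{H}(f_F)(\theta)$, yielding $\|\mathcal{H}(f_F)\|_{L^\infty(\R/\Z)}$. In the limit $\delta\to 0^+$ I would rescale by setting $\theta=\delta y$: the $m=0$ term becomes $\mathcal{H}(F)(y)$, and the remaining terms sum to $O(\delta^2)$ uniformly on each bounded set of $y$ (using the asymptotic $\mathcal{H}(F)(x)=\frac{1}{\pi x}+O(|x|^{-3})$ for even compactly supported $F$ of unit mass, together with the expansion $\delta\pi\cot(\pi\delta y)=\frac{1}{y}+O(\delta^2)$ for $\delta y$ near $0$). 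Taking $\sup_y$ first and then $\delta\to 0^+$ recovers $\|\mathcal{H}(F)\|_{L^\infty(\R)}$.

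The hard direction is the reverse inequality
\[
\bigl|\delta\,\mathcal{H}(f_\delta)(\theta)\bigr|\,\leq\,\max\bigl\{\|\mathcal{H}(F)\|_{L^\infty(\R)},\,\|\mathcal{H}(f_F)\|_{L^\infty(\R/\Z)}\bigr\}\qquad\text{for every }\delta\in(0,1].
\]
Here I would start from the integral representation
\[
\delta\,\mathcal{H}(f_\delta)(\theta)\,=\,\int_{-1/2}^{1/2}F(u)\,\bigl[\delta\cot(\pi(\theta-\delta u))\bigr]\du,
\]
and split the kernel via the partial-fraction identity $\delta\pi\cot(\pi z)=\sum_m\delta/(z+m)$ into the ``central'' term $m=0$ (which reassembles into $\mathcal{H}(F)(\theta/\delta)$, controlled by $\|\mathcal{H}(F)\|_{L^\infty(\R)}$) and the ``distant'' terms (which at $\delta=1$ combine with the central term to produce exactly the cotangent kernel giving $\mathcal{H}(f_F)$). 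The main obstacle will be controlling the distant terms uniformly in $\delta\in(0,1)$ without losing to crude triangle-inequality bounds, and I expect this to require a delicate argument exploiting the non-negativity $F\geq 0$, possibly coupled with a maximum-principle analysis of $\delta\,\mathcal{H}(f_\delta)(\theta)$ on the cylinder $(\R/\Z)\times(0,1]$ that rules out interior suprema by examining the stationarity conditions $\partial_\theta[\delta\,\mathcal{H}(f_\delta)]=\partial_\delta[\delta\,\mathcal{H}(f_\delta)]=0$ at a would-be interior maximum.
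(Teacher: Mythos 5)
Your central identity $\delta\,\mathcal{H}(f_\delta)(\theta) = \sum_{m\in\Z}\mathcal{H}(F)\big((\theta+m)/\delta\big)$ (understood symmetrically, since the terms decay only like $1/m$) is correct and is in fact equivalent to the paper's Lemma~\ref{Lem4}: substituting the partial-fraction expansion of $\cot(\pi z)$ into the singular-integral representation of $\mathcal{H}(f_\delta)$, separating the $m=0$ term, and folding $m$ with $-m$ gives exactly that decomposition. Your treatment of the easy direction ``$\sup \geq \max$'' (evaluating at $\delta=1$ and letting $\delta\to 0^+$ after the rescaling $\theta=\delta x_0$) also matches what the paper does.

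The genuine gap is that the hard direction ``$\sup \leq \max$'' is not proved: you identify the obstacle --- controlling the distant $m\neq 0$ terms uniformly in $\delta$ --- but only speculate that a maximum-principle analysis on the cylinder $(\R/\Z)\times(0,1]$ might work, without carrying it out. That route is not what the paper does, and it is not clear it would close. The missing idea is a sign-splitting argument. Using the evenness of $f_\delta$, the tail in the decomposition takes the form
\begin{equation*}
\frac{\delta}{\pi}\sum_{k\geq 1}\int_0^{\delta/2} f_\delta(\alpha)\,\frac{4\theta(\theta^2 - \alpha^2 - k^2)}{\big((\theta - \alpha)^2 - k^2\big)\big((\theta + \alpha)^2 - k^2\big)}\,\d\alpha,
\end{equation*}
where the denominator is positive and the factor $(\theta^2-\alpha^2-k^2)$ is negative throughout. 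Since $\mathcal{H}(f_\delta)$ is odd and one checks $\mathcal{H}(f_\delta)\leq 0$ on $[-\tfrac12,-\tfrac\delta2]$, one need only consider $\theta$ where $\mathcal{H}(f_\delta)(\theta)>0$ and split on the sign of $\theta$. If $0<\theta<\tfrac12$, every summand is $\leq 0$, so dropping the tail gives $\delta\,\mathcal{H}(f_\delta)(\theta)\leq \mathcal{H}(F)(\theta/\delta)$. If $-\tfrac\delta2<\theta<0$, each summand is $\geq 0$ and, after substituting $\alpha=\delta\beta$, $\theta'=\theta/\delta$, is a decreasing function of $k/\delta\geq 1$; replacing $\delta$ by $1$ therefore increases the tail and yields $\delta\,\mathcal{H}(f_\delta)(\theta)\leq \mathcal{H}(f_1)(\theta')$. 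Without this case analysis on the sign of $\theta$ and the monotonicity in $\delta$ of the summands, the proposal does not establish the $\leq$ direction, and the positivity $F\geq 0$ you correctly flagged as essential is not actually used.
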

We postpone the proof of this result until the next section.

\subsection{Conclusion} Assume for a moment that we have established Proposition \ref{Prop_key}. Let us simplify the notation by writing
$$\mathcal{C}(F) := \max\!\left\{\|\mathcal{H}(F)\|_{L^{\infty}(\R)}\,,\,\|\mathcal{H}(f_F)\|_{L^{\infty}(\R/\Z)}\right\}.$$
It then follows from \eqref{20210301_18:21} and Proposition \ref{Prop_key} that 
\begin{equation}\label{20210301_18:23}
\mathcal{G}_{\delta} \leq \frac{2}{\pi \delta} \,\mathcal{C}(F),
\end{equation}
and from \eqref{20210221_17:09},  \eqref{20210222_08:52}, \eqref{20210301_18:24}, and \eqref{20210301_18:23} we get
$$N(I; P) - |I|\, N  \leq N \delta + \frac{4}{\pi \delta} \,\mathcal{C}(F) \, h(P).$$
The choice of 
\begin{equation}\label{20210302_10:22}
\delta = \sqrt{\frac{4\,\mathcal{C}(F) \, h(P)}{\pi N}}
\end{equation}
minimizes the right-hand side of the expression above and leads to the bound  
$$N(I; P) - |I|\, N  \leq  \frac{4\sqrt{\mathcal{C}(F)}}{\sqrt{\pi}}  \,\sqrt{N \,h(P)}.$$
Note that this is independent of the interval $I$. Minimizing over $F \in \mathcal{A}$ we arrive at the desired conclusion
$$\mathcal{D}(P) \leq   \frac{4\sqrt{\bf C}}{\sqrt{\pi}}  \sqrt{N \,h(P)}.$$
Therefore, Theorem \ref{Thm1} follows from Proposition \ref{Prop_key}.
\smallskip

{\sc Remark.} From the fact that $\log^+\!xy \leq \log^+\!x \,+\, \log^+\!y$ for any $x,y >0$, if $P(z) = \prod_{j=1}^N \big( z-e^{2\pi i\theta_j}\big)$ we get
\begin{align*}
h(P)  =  \int_{0}^{1} \log^+ \big|P\big(e^{2\pi i \theta}\big)\big| \,\d\theta \leq N \int_{0}^{1} \log^+ \big|e^{2\pi i \theta} - 1\big| \,\d\theta = N \, \frac{ 3 \sqrt{3} \, L(2, \chi_3)}{4\pi}\,,
\end{align*}
as remarked in \eqref{20210302_10:20}. Hence, the choice of $\delta$ in \eqref{20210302_10:22} indeed falls in the interval $(0, 1]$ if 
\begin{equation}\label{20210302_10:39}
\mathcal{C}(F) \leq \frac{\pi^2}{3 \sqrt{3} \, L(2, \chi_3)} = 2.43107\ldots.
\end{equation}
In Section \ref{Sec4_Numerics}, we observe that there are functions $F \in \mathcal{A}$ that verify this bound. For example,  the triangle function $F_{\blacktriangle}(x) = 2 \, \max\big\{1 - 2|x|, 0\big\}$ has $\mathcal{C}(F_{\blacktriangle})=1.12219\ldots$. Hence, without loss of generality, we may assume that from the start we are working under the threshold \eqref{20210302_10:39}.

\section{Maxima of Hilbert transforms}\label{Max_HT_Sec}
The purpose of this section is to prove the key Proposition \ref{Prop_key}, hence concluding the proof of Theorem \ref{Thm1}. Recall that we have been using the definition of the Hilbert transforms via the multipliers \eqref{20210301_09:24} and  \eqref{20210301_09:25} and Fourier inversion (hence all Hilbert transforms here are bounded and continuous functions). In this section, the alternative representations of the Hilbert transforms as singular integrals will be particularly useful. Throughout this section we continue to assume that $0 \neq F \in \mathcal{A}$ is normalized so that $\|F\|_{L^1(\R)} = \int_{\R} F(x)\,\dx = 1$, and for each $0 < \delta \leq 1$ we let $F_{\delta}(x) := \tfrac{1}{\delta} F\big( \tfrac{x}{\delta}\big)$ and $f_\delta(\theta) := \sum_{k \in \Z} F_{\delta}(\theta + k)$. For $x \in \R$, let 
$$\|x\|: = \min\{|x - n| \, : \, n  \in \Z\}$$ 
be the distance of $x$ to the nearest integer. 

\subsection{Hilbert transforms as singular integrals} For each $0 < \delta \leq 1$, since $\mathcal{H}(f_{\delta})$ is an odd and continuous function in $\R/\Z$, we have $\mathcal{H}(f_{\delta})(0) = \mathcal{H}(f_{\delta})(\pm\tfrac12) = 0$. We start by establishing the following useful relation between the periodic Hilbert transforms $\mathcal{H}(f_{\delta})$ and the Hilbert transform $\mathcal{H}(F)$.
\begin{lemma}\label{Lem4}
Let $0 < \delta \leq 1$ and $-\frac12 < \theta < \frac12$. Then
\begin{align}\label{20210304_10:59}
\delta \,\mathcal{H}(f_{\delta})(\theta)  = \mathcal{H}(F)\left(\tfrac{\theta}{\delta}\right) +  \frac{\delta}{\pi} \sum_{k \geq 1} \int_{0}^{\frac{\delta}{2}} f_{\delta}(\alpha)\frac{4\theta(\theta^2 - \alpha^2 - k^2)}{\big((\theta - \alpha)^2 - k^2\big) \big((\theta + \alpha)^2 - k^2\big)}\,\d\alpha.
\end{align}
\end{lemma}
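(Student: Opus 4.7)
The plan is to prove the identity by writing both sides as singular integrals and inserting the classical partial-fraction expansion
$$\pi \cot(\pi z) = \frac{1}{z} + \sum_{k\geq 1}\!\left(\frac{1}{z-k} + \frac{1}{z+k}\right)$$
into the singular-integral representation of $\mathcal{H}(f_\delta)$. The $k=0$ term should, after a dilation, reproduce $\mathcal{H}(F)(\theta/\delta)$ exactly, while the $k\geq 1$ terms should rearrange by symmetry into the stated correction kernel $4\theta(\theta^2-\alpha^2-k^2)/\big((\theta-\alpha)^2-k^2\big)\big((\theta+\alpha)^2-k^2\big)$.

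More concretely, I would start from the periodic representation in \eqref{20210316_12:14}, and after the change of variable $u=\theta-\alpha$, use periodicity of the integrand to rewrite
$$\mathcal{H}(f_\delta)(\theta) = {\rm p.v.}\!\int_{-1/2}^{1/2} f_\delta(u)\,\cot(\pi(\theta-u))\,\du.$$
Since $\supp(F_\delta)\subset[-\delta/2,\delta/2]\subset[-1/2,1/2]$, the periodization collapses to $f_\delta=F_\delta$ on this period, so we may replace $f_\delta$ by $F_\delta$ and shrink the integration to $[-\delta/2,\delta/2]$. Inserting the cotangent expansion at $z=\theta-u$ splits the integral into a singular piece
$$\frac{\delta}{\pi}\,{\rm p.v.}\!\int_{-\delta/2}^{\delta/2}\frac{F_\delta(u)}{\theta-u}\,\du,$$
plus a tail. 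The dilation $t=(\theta-u)/\delta$ together with $\delta F_\delta(x)=F(x/\delta)$ and $\supp(F)\subset[-1/2,1/2]$ identifies the singular piece with $\mathcal{H}(F)(\theta/\delta)$. For the tail, evenness of $F_\delta$ allows the symmetrization
$$\int_{-\delta/2}^{\delta/2}\!F_\delta(u)\frac{2(\theta-u)}{(\theta-u)^2-k^2}\,\du = \frac{1}{2}\int_{-\delta/2}^{\delta/2}\!F_\delta(u)\!\left[\frac{2(\theta-u)}{(\theta-u)^2-k^2}+\frac{2(\theta+u)}{(\theta+u)^2-k^2}\right]\!\du,$$
and since both the bracket and $F_\delta$ are even in $u$, this reduces to twice the same integrand on $[0,\delta/2]$. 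A one-line algebraic combination over a common denominator, with numerator $2(\theta-u)(\theta+u)^2+2(\theta+u)(\theta-u)^2-4k^2\theta = 4\theta(\theta^2-u^2-k^2)$, produces exactly the kernel appearing in \eqref{20210304_10:59}. Replacing $F_\delta$ by $f_\delta$ on $[0,\delta/2]$ completes the identity.

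The only subtleties are bookkeeping issues rather than genuine obstacles. First, principal values are needed only at $k=0$ (and only when $|\theta|\leq\delta/2$), because the hypothesis $|\theta|<1/2$ together with $|u|\leq\delta/2\leq 1/2$ forces $(\theta\pm u)^2<1\leq k^2$ for all $k\geq 1$, so the tail denominators are bounded away from zero. Second, the interchange of the infinite sum with the integral has to be justified: this follows from the uniform bound $S_k(\theta,u)=O(\theta/k^2)$ on $u\in[0,\delta/2]$, which gives absolute and uniform convergence of the $k$-series. The step I expect to require the most care is the verification that the rescaling $t=(\theta-u)/\delta$ correctly produces $\mathcal{H}(F)(\theta/\delta)$ as a \emph{full-line} singular integral—this needs the observation that the support of $F(\theta/\delta-t)$ in $t$ is precisely contained in the image $[\theta/\delta-1/2,\theta/\delta+1/2]$ of $[-\delta/2,\delta/2]$ under the dilation, so that the integral extends tautologically to all of $\R$.
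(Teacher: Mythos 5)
Your proposal is correct and follows essentially the same route as the paper: insert the partial-fraction expansion of $\cot(\pi z)$ into the periodic singular integral, isolate the $k=0$ term via the dilation to recover $\mathcal{H}(F)(\theta/\delta)$, and symmetrize the $k\geq 1$ tail over $u\mapsto -u$ to obtain the stated kernel. The one small expository point the paper is more careful about is that the singular-integral representations of $\mathcal{H}(f_\delta)(\theta)$ and $\mathcal{H}(F)(\theta/\delta)$ a priori hold only for a.e.\ $\theta$, so the identity is first established on a full-measure subset of $(-\tfrac12,\tfrac12)$ and then extended to all such $\theta$ by the continuity of both sides.
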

\begin{proof} Let $\Gamma \subset \R$ be the set of full measure (i.e.~$\R \setminus \Gamma$ has measure zero) such that for every $x \in \Gamma$ the limit 
\begin{equation}\label{20210304_16:35}
  \lim_{\varepsilon \to 0} \frac{1}{\pi} \int_{\varepsilon \leq |t|} F(x - t) \,\frac{1}{t}\,\d t 
 \end{equation}
exists and is equal to $\mathcal{H}(F)(x)$. Similarly, for a fixed $0 < \delta \leq 1$, let $\Gamma_{\delta} \subset \R/\Z$ be the set of full measure such that for every $\theta \in \Gamma_{\delta} $ the limit
\begin{equation}\label{20210304_10:44}
\lim_{\varepsilon \to 0} \int_{\varepsilon \leq |\alpha| \leq \frac12} f_{\delta}(\theta - \alpha) \,\cot(\pi \alpha)\,\d \alpha 
\end{equation}
exists and is equal to $\mathcal{H}(f_{\delta})(\theta)$.

\smallskip

Recall that, for $\|\alpha\| \geq \varepsilon>0$, we have the absolutely convergent expansion (e.g.~\cite[\S 1.421 eq.~3]{GR})
\begin{equation}\label{20210302_13:48}
\cot(\pi \alpha) = \frac{1}{\pi} \left( \frac{1}{\alpha} + \sum_{k \geq 1} \frac{2\alpha}{\alpha^2 - k^2}\right).
\end{equation}
Assume that $\theta \in \Gamma_{\delta}$ and $\frac{\theta}{\delta} \in \Gamma$. Let $\varepsilon$ be small and write $X_{\varepsilon} = \big\{ \alpha \in \big[-\frac{\delta}{2}, \frac{\delta}{2}] \, :\, \|\theta - \alpha\| \geq \varepsilon\big\}$ and $Y_{\varepsilon} = \big\{ \beta \in \big[-\frac{1}{2}, \frac{1}{2}] \, :\, \|\theta - \delta\beta\| \geq \varepsilon\big\}$. Using \eqref{20210302_13:48}, and with a change of variables $\alpha = \delta \beta$, we note that 
\begin{align*}
\delta\int_{\varepsilon \leq |\alpha| \leq \frac12} & f_{\delta}(\theta - \alpha) \,\cot(\pi \alpha)\,\d \alpha = \delta \int_{X_{\varepsilon}} f_{\delta}(\alpha) \,\cot(\pi (\theta - \alpha))\,\d \alpha\\
& = \frac{\delta}{\pi} \int_{X_{\varepsilon}}  \frac{f_{\delta}(\alpha)}{(\theta - \alpha)}\,\d \alpha \ + \frac{\delta}{\pi} \sum_{k \geq 1} \int_{X_{\varepsilon}} f_{\delta}(\alpha)\frac{2(\theta - \alpha)}{(\theta - \alpha)^2 - k^2}\,\d\alpha\\
& = \frac{1}{\pi} \int_{Y_{\varepsilon}}  \frac{F(\beta)}{\big(\frac{\theta}{\delta} - \beta\big)}\,\d \beta  + \frac{\delta}{\pi} \sum_{k \geq 1} \int_{X_{\varepsilon}} f_{\delta}(\alpha)\frac{2(\theta - \alpha)}{(\theta - \alpha)^2 - k^2}\,\d\alpha.
\end{align*}
Passing to the limit as $\varepsilon \to 0$, and using the fact that $f_{\delta}$ is even (to combine $\alpha$ and $-\alpha$ in the integral below), we get
\begin{align}\label{20210304_10:12}
\begin{split}
\delta \,\mathcal{H}(f_{\delta})(\theta) & = \mathcal{H}(F)\left(\tfrac{\theta}{\delta}\right) +  \frac{\delta}{\pi} \sum_{k \geq 1} \int_{-\frac{\delta}{2}}^{\frac{\delta}{2}} f_{\delta}(\alpha)\frac{2(\theta - \alpha)}{(\theta - \alpha)^2 - k^2}\,\d\alpha\\
& = \mathcal{H}(F)\left(\tfrac{\theta}{\delta}\right) +  \frac{\delta}{\pi} \sum_{k \geq 1} \int_{0}^{\frac{\delta}{2}} f_{\delta}(\alpha)\frac{4\theta(\theta^2 - \alpha^2 - k^2)}{\big((\theta - \alpha)^2 - k^2\big) \big((\theta + \alpha)^2 - k^2\big)}\,\d\alpha.
\end{split}
\end{align}
In principle, \eqref{20210304_10:12} holds for $\theta$ in the set of full measure $(-\tfrac12, \tfrac12) \cap \Gamma_\delta \cap \delta \Gamma$. Since the functions in \eqref{20210304_10:12} are continuous functions of $\theta \in (-\tfrac12, \tfrac12)$ we conclude that the identity is valid for all $\theta$ in this range.
\end{proof}

\subsection{Proof of Proposition \ref{Prop_key}} We start by observing that, for $0 < \delta \leq 1$ and $-\tfrac12 \leq \theta \leq -\tfrac{\delta}{2}$, we have
\begin{equation}\label{20210304_11:35}
\mathcal{H}(f_{\delta})(\theta) \leq 0.
\end{equation}
In fact, if $\delta < 1$ and $\theta \in (-\tfrac12, -\tfrac{\delta}{2}) \cap \Gamma_{\delta}$, using that $f_{\delta}$ is even, non-negative and supported in $[-\tfrac{\delta}{2}, \tfrac{\delta}{2}]$ along with the singular integral representation \eqref{20210304_10:44}, we get 
\begin{align*}
\mathcal{H}(f_{\delta})(\theta) & = \lim_{\varepsilon \to 0} \int_{X_{\varepsilon}} f_{\delta}(\alpha) \,\cot(\pi (\theta - \alpha))\,\d \alpha = \int_{-\frac{\delta}{2}}^{\frac{\delta}{2}} f_{\delta}(\alpha) \,\cot(\pi (\theta - \alpha))\,\d \alpha\\
& = 2 \int_{0}^{\frac{\delta}{2}} f_{\delta}(\alpha) \,\big(\cot(\pi (\theta - \alpha)) + \cot(\pi (\theta + \alpha))\big)\,\d \alpha \leq 0.
\end{align*}
A similar argument shows that if $x \leq -\tfrac{1}{2}$ then $\mathcal{H}(F)(x) \leq 0.$

\smallskip

Since $\mathcal{H}(f_{\delta})$ is an odd and continuous function in $[-\tfrac12, \tfrac12]$ (not identically zero), its maximum in absolute value coincides with the positive maximum, and we investigate the latter. Recall that $\mathcal{H}(f_{\delta})(0) = \mathcal{H}(f_{\delta})(\pm\tfrac12) = 0$. We split our analysis into two cases. 

\subsubsection{Case 1} Assume that $0 < \theta < \frac12$ is such that $\mathcal{H}(f_{\delta})(\theta) > 0$. In this case, the sum on the right-side of \eqref{20210304_10:59} is clearly non-positive and it plainly follows by Lemma \ref{Lem4} that 
\begin{equation}\label{20210304_12:36}
\delta \,\mathcal{H}(f_{\delta})(\theta)  \leq \mathcal{H}(F)\left(\tfrac{\theta}{\delta}\right).
\end{equation}

\subsubsection{Case 2} Assume that $-\tfrac12 < \theta < 0$ is such that $\mathcal{H}(f_{\delta})(\theta) > 0$. As observed in \eqref{20210304_11:35}, we must have $-\frac{\delta}{2} < \theta < 0$ in this situation. Using Lemma \ref{Lem4}, letting $\theta' = \frac{\theta}{\delta}$ (hence $-\frac12 < \theta' < 0$) and changing variables $\alpha = \delta \beta$ in the integral, we rewrite \eqref{20210304_10:59} as
\begin{align}\label{20210304_12:27_v2}
\delta \,\mathcal{H}(f_{\delta})(\theta) & = \mathcal{H}(F)(\theta') +  \frac{1}{\pi} \sum_{k \geq 1} \int_{0}^{\frac{1}{2}} F(\beta) \frac{4\theta'\left(\theta'^2 - \beta^2 - \left(\tfrac{k}{\delta}\right)^2\right)}{\left((\theta' - \beta)^2 - \left(\tfrac{k}{\delta}\right)^2 \right) \left((\theta' + \beta)^2 - \left(\tfrac{k}{\delta}\right)^2\right)}\,\d\beta.
\end{align} 
The important observation now is that, for each $k \geq 1$, the term
$$ \frac{4\theta'\left(\theta'^2 - \beta^2 - \left(\tfrac{k}{\delta}\right)^2\right)}{\left((\theta' - \beta)^2 - \left(\tfrac{k}{\delta}\right)^2 \right) \left((\theta' + \beta)^2 - \left(\tfrac{k}{\delta}\right)^2\right)}$$
is positive and, for fixed $-\frac12 < \theta' < 0$ and $0 \leq \beta \leq \frac{1}{2}$, the function
$$h(x) :=  \frac{4\theta' \big(\theta'^2 - \beta^2 - x^2\big)}{\big((\theta' - \beta)^2 - x^2 \big) \big((\theta' + \beta)^2 - x^2\big)}$$
verifies $h'(x) < 0$ for $x \geq 1$. This is a routine calculation. 
The conclusion is that we could replace $\delta$ on each summand on the right-hand side of \eqref{20210304_12:27_v2} by its maximum value $\delta =1$ and do better, i.e. 
\begin{align}\label{20210304_12:37}
\begin{split}
\delta \,\mathcal{H}(f_{\delta})(\theta) & \leq \mathcal{H}(F)(\theta') +  \frac{1}{\pi} \sum_{k \geq 1} \int_{0}^{\frac{1}{2}} F(\beta) \frac{4\theta'\left(\theta'^2 - \beta^2 - k^2\right)}{\big((\theta' - \beta)^2 - k^2 \big) \big((\theta' + \beta)^2 - k^2\big)}\,\d\beta\\
& = \mathcal{H}(f_{1})(\theta')\,,
\end{split}
\end{align} 
where the last identity follows from another application of Lemma \ref{Lem4}.

\subsubsection{Conclusion} From \eqref{20210304_12:36} and \eqref{20210304_12:37}, we plainly arrive at the conclusion that 
\begin{align}\label{20210304_12:39}
\sup_{0 < \delta \leq 1} \delta \,  \|\mathcal{H}(f_{\delta})\|_{L^{\infty}(\R/\Z)} \leq \max\big\{\|\mathcal{H}(F)\|_{L^{\infty}(\R)}\,,\,\|\mathcal{H}(f_F)\|_{L^{\infty}(\R/\Z)}\big\}=: \mathcal{C}(F).
\end{align}
If $\mathcal{C}(F) = \|\mathcal{H}(f_F)\|_{L^{\infty}(\R/\Z)}$, then \eqref{20210304_12:39} is obviously an equality (recall that $f_F = f_1$ in this notation). On the other hand, if $\mathcal{C}(F) = \|\mathcal{H}(F)\|_{L^{\infty}(\R)}$, let $x_0 \in \R$ be such that 
$$\mathcal{C}(F)  = \|\mathcal{H}(F)\|_{L^{\infty}(\R)} = \mathcal{H}(F)(x_0)$$
(note that $\mathcal{H}(F)$ goes to zero at infinity, hence such $x_0$ indeed exists). Let $\theta(\delta) = \delta x_0$, for $\delta$ sufficiently small so that $-\tfrac12 < \theta(\delta) < \tfrac12$. We apply Lemma \ref{Lem4} once more, by changing variables $\alpha = \delta \beta$ in the integral and rewriting \eqref{20210304_10:59} in the form
\begin{align}\label{20210304_12:27}
\delta \,\mathcal{H}(f_{\delta})(\delta x_0) & = \mathcal{H}(F)(x_0) +  \frac{1}{\pi} \sum_{k \geq 1} \int_{0}^{\frac{1}{2}} F(\beta) \frac{4\delta^2x_0\big(\delta^2x_0^2 - \delta^2\beta^2 - k^2\big)}{\big(\delta^2(x_0 - \beta)^2 - k^2 \big) \big(\delta^2(x_0 + \beta)^2 - k^2\big)}\,\d\beta.
\end{align} 
An application of the dominated convergence theorem on the right-hand side of \eqref{20210304_12:27} guarantees that 
$$\lim_{\delta \to 0^+} \delta \,\mathcal{H}(f_{\delta})(\delta x_0) = \mathcal{H}(F)(x_0)\,,$$
and we have equality in \eqref{20210304_12:39} as desired. This concludes the proof of Proposition \ref{Prop_key}.

\section{A brief interlude}\label{Sec4_Numerics}

Before moving to the final section, where we present the proof of Theorem \ref{Thm2}, let us briefly make some remarks to highlight a few important elements in our discussion. Throughout this section let $f = f_F$.

\subsection{Dichotomy} \label{Dichotomy}In the definition of $\mathcal{C}(F)$ we have a maximum between two $L^{\infty}$-norms. One may wonder if one of these is always dominated by the other. Our first observation is that this is not always the case. In principle, there are examples of functions for which either $L^{\infty}$-norm can be maximal.

\smallskip

If the maximum value of $\mathcal{H}(f)(\theta)$ occurs at a certain $0 < \theta < \tfrac{1}{2}$, then 
\begin{equation}\label{20210309_15:26}
\mathcal{C}(F) = \|\mathcal{H}(F)\|_{L^{\infty}(\R)}.
\end{equation}
This follows directly from \eqref{20210304_12:36} with $\delta =1$. This is the case, in particular, if $F$ is {\it radial decreasing}. In fact, under such assumption, for a.e.  $-\tfrac12 < \theta < 0$ we have
$$\mathcal{H}(f)(\theta)  = \lim_{\varepsilon \to 0} \int_{\varepsilon \leq |\alpha| \leq \frac12} f(\theta - \alpha) \,\cot(\pi \alpha)\,\d \alpha =  \lim_{\varepsilon \to 0} \int_{\varepsilon}^{\frac12} \big( f(\theta - \alpha) - f(\theta + \alpha)\big) \,\cot(\pi \alpha)\,\d \alpha \leq 0.$$
Since $\mathcal{H}(f)$ is continuous, this inequality is valid for all $-\tfrac12 < \theta < 0$. Note that above we used the fact that $\|\theta + \alpha\| \leq \|\theta - \alpha\|$ in our range to argue that $f(\theta - \alpha) \leq f(\theta + \alpha)$. 

\smallskip

On the other hand, if the maximum value of $\mathcal{H}(F)(x)$ occurs at a certain $-\frac12 < x < 0$ (recall that we have seen that $\mathcal{H}(F)(x) \leq 0$ for $x \leq -\tfrac12$), then
\begin{equation}\label{20210309_16:24}
\mathcal{C}(F) = \|\mathcal{H}(f)\|_{L^{\infty}(\R/\Z)}.
\end{equation}
This follows from \eqref{20210304_10:59} with $\delta =1$. There are indeed functions $F$ with such behaviour, for instance the piecewise linear function, normalized so that $\int_{\R} F(x)\,\dx = 1$,
\begin{align}\label{20210316_13:43}
F(x) = 
\begin{cases}
0, \ {\rm if} \ 0 \leq |x| \leq \frac14;\\
64|x| - 16, \ {\rm if} \ \frac14 \leq |x| \leq \frac{5}{16};\\
\frac{1}{3} (32 - 64|x|), \ {\rm if} \ \frac{5}{16} \leq |x| \leq \frac{1}{2}.
\end{cases}
\end{align}
See Figure \ref{figure_outlier} for the plots of the Hilbert transforms of this example. In a certain sense, the cases for which \eqref{20210309_16:24} holds are slightly unusual, and produce large $L^{\infty}$-norms. We prove in the next section that functions $F$ such that $\mathcal{C}(F)$ is very close to the infimum ${\bf C}$ tend to like option \eqref{20210309_15:26} better. 

\begin{figure} 
\includegraphics[scale=0.5]{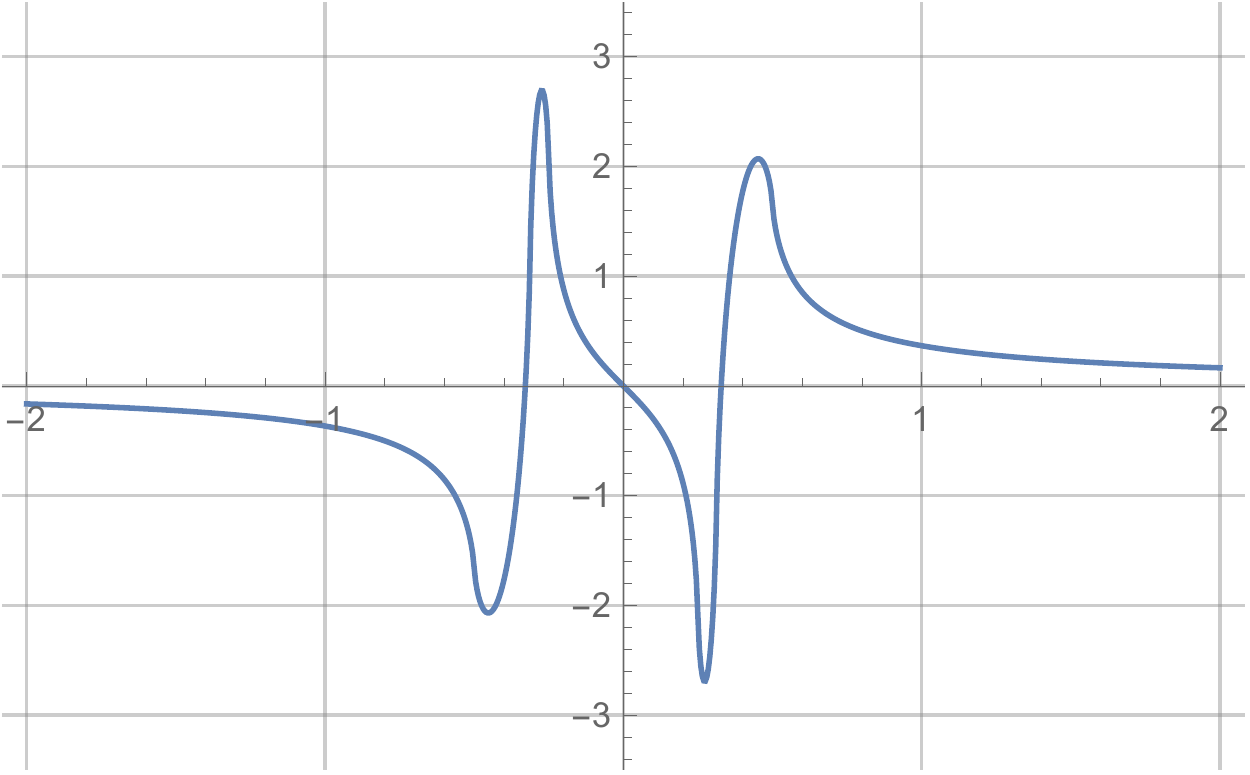}  \qquad 
\includegraphics[scale=0.5]{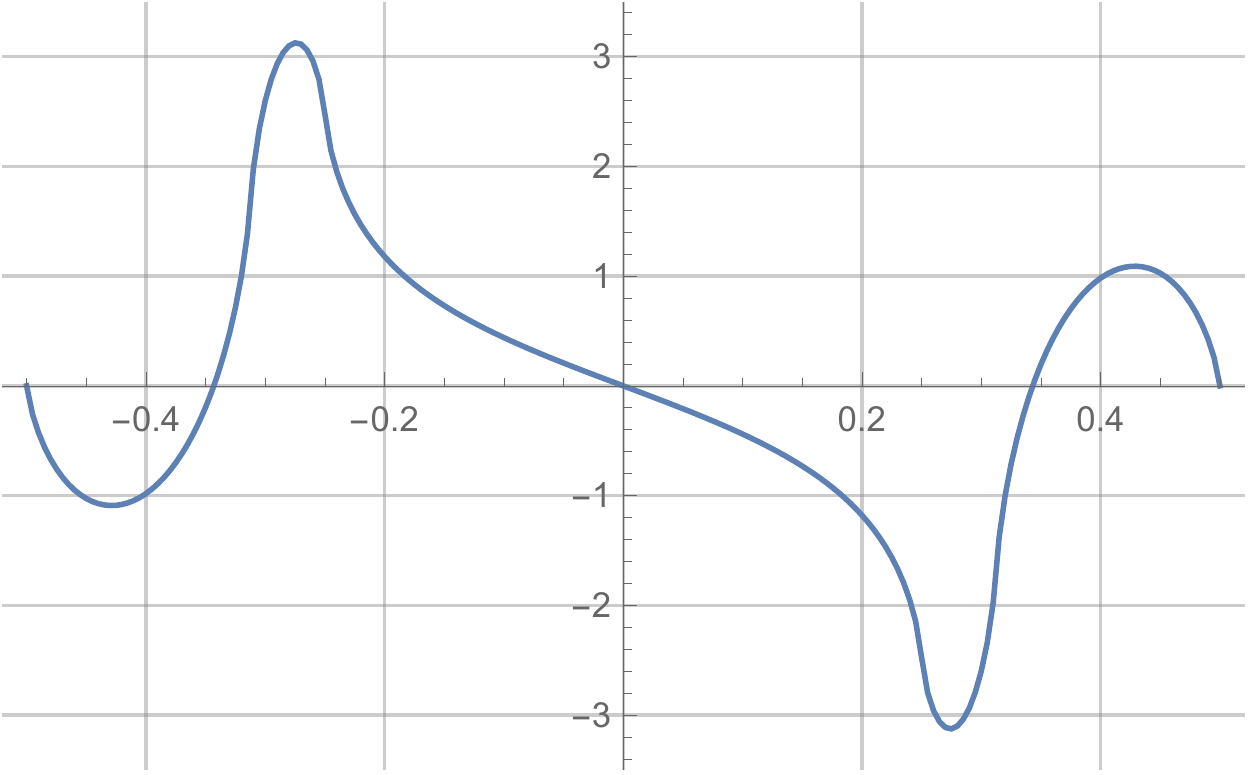}
\caption{For the function $F$ defined in \eqref{20210316_13:43}, on the left we have the graph of $\mathcal{H}(F)$ and on the right we have the graph of $\mathcal{H}(f_F)$.}
	\label{figure_outlier}
\end{figure}

\subsection{The triangle function} Consider the triangle function $F_{\blacktriangle}$ given by 
$$F_{\blacktriangle}(x) = 2 \, \max\big\{1 - 2|x|, 0\big\}.$$
Note that $\|F_{\blacktriangle}\|_{L^1(\R)} = 1$. An application of integration by parts in \eqref{20210304_16:35} shows that 
\begin{align*}
\mathcal{H}(F_{\blacktriangle})(x)  = \frac{1}{\pi} \int_0^{\frac12}F_{\blacktriangle}'(t)\,  \log \left( \frac{|x - t|}{|x+t|}\right) \,\dt  = \frac{4}{\pi}\int_0^{\frac12} \log \left( \frac{|x + t|}{|x-t|}\right) \,\dt.
\end{align*}
We seek the global maximum of $\mathcal{H}(F_{\blacktriangle})(x)$ when $x >0$. One can check that this function is decreasing if $x > \tfrac12$, simply because  $\tfrac{|x + t|}{|x-t|} < \tfrac{|y + t|}{|y-t|}$ if $x > y > \tfrac12$ for all 
$0 < t < \tfrac12$. For $0 < x < \tfrac12$, we may write
\begin{align*}
\mathcal{H}(F_{\blacktriangle})(x) = \frac{4}{\pi}\left(\int_x^{x+ \frac12} \log |y| \,\d y  - \int_{x-\tfrac12}^{x} \log |y| \,\d y \right).
\end{align*}
Hence, by the fundamental theorem of calculus, we have
\begin{align*}
\mathcal{H}(F_{\blacktriangle})'(x)= \log|x + \tfrac12| -2 \log |x| +  \log |x-\tfrac12|\,,
\end{align*}
and for $0 < x < \tfrac12$ we find that $\mathcal{H}(F_{\blacktriangle})'(x) = 0$ if and only if
$$\frac{\big(x+\tfrac12\big)\big(\tfrac12 - x\big)}{x^2} = 1\,,$$
which yields $x = 1/(2\sqrt{2})$. This is the global maximum and by \eqref{20210309_15:26} we get 
\begin{align*}
 \|\mathcal{H}(F_{\blacktriangle})\|_{L^{\infty}(\R)} = \mathcal{H}(F_{\blacktriangle})\big(\tfrac{1}{2\sqrt{2}}\big) = \frac{4}{\pi}\left(\int_{\frac{1}{2\sqrt{2}}}^{\frac{1}{2\sqrt{2}}+ \frac12} \log y \,\d y  - \int_{\frac{1}{2\sqrt{2}}-\tfrac12}^{\frac{1}{2\sqrt{2}}} \log |y|  \,\d y \right)= \frac{4}{\pi} \log(1 + \sqrt{2}). 
\end{align*} 
This shows that the constant ${\bf C}$ in (EP1) satisfies
$${\bf C} \leq \mathcal{C}(F_{\blacktriangle}) = \frac{4}{\pi} \log(1 + \sqrt{2}) = 1.12219\ldots\,,$$
and, as a consequence of Theorem \ref{Thm1}, for monic polynomials $P$ of degree $N$ with $P(0)\ne0$ we deduce that
\begin{equation}\label{20210305_09:52}
\mathcal{D}(P) \le C \sqrt{ N \,h(P)} \quad \text{with} \quad C=\frac{8}{\pi}\sqrt{\log(1 + \sqrt{2})}  = 2.3906\ldots.
 \end{equation}

\smallskip

{\sc Remark.} In \cite{S}, Soundararajan works with the triangle test function $F_{\blacktriangle}$ as above, establishing a bound in \eqref{20210227_14:47} with $C = 8/\pi = 2.54\ldots$. Later, it came to our attention that, in unpublished notes\footnote{Personal communication.}, he independently arrived at the refined inequality in \eqref{20210305_09:52} by further studying the situation with this particular test function.

\section{Magic functions} \label{Magic}

In this section we prove Theorem \ref{Thm2}. Ultimately, our proof relies on the existence of two {\it magic functions}. The first one, mentioned in the statement of the theorem, is the even function, supported in $[-\frac12, \frac12]$, 
\begin{equation}\label{20210316_14:49}
\frak{F}(x) := \frac{2}{\pi}\log\left( \frac{1 + \sqrt{1 - 4x^2}}{2|x|}\right) \ \ \ \ \ \big({\rm for}\ -\tfrac12 \leq x \leq \tfrac12\big).
\end{equation}
The second one is the odd function, also supported in $[-\frac12, \frac12]$, given by
\begin{equation}\label{20210316_14:36}
\qquad \qquad  \ \ \  \frak{G}(x) :=  \frac{2x}{\sqrt{1 - 4x^2}}  \ \ \ \ \ \qquad  \big({\rm for}\ -\tfrac12 < x < \tfrac12\big).
\end{equation}

\smallskip

We first treat the extremal problem (EP1), to find the value of the sharp constant ${\bf C}$. Later, with some of the main ingredients already laid out, we discuss the details that lead to the solution of the extremal problem (EP2) and the sharp constant ${\bf C}^*$.
 
\subsection{Lower bound via duality} The map $\mathcal{H}:L^2(\R) \to L^2(\R)$ is an isometry and verifies $\mathcal{H}^2 = - I$ (therefore the inverse of $\mathcal{H}$ is $-\mathcal{H}$). Hence, whenever $F_1, F_2 \in L^2(\R)$ we have 
\begin{equation}\label{20210310_07:54}
\int_{\R} \mathcal{H}(F_1)(x) \, \overline{F_2(x)}\,\dx = - \int_{\R} F_1(x)\, \overline{\mathcal{H}(F_2)(x)}\,\dx. 
\end{equation}
Since $\mathcal{H}: L^p(\R) \to L^p(\R)$ is a bounded operator for $1 < p < \infty$, identity \eqref{20210310_07:54} extends to the situation where $F_1 \in L^p(\R)$ and $F_2 \in L^{p'}(\R)$, where $\tfrac{1}{p} + \tfrac{1}{p'} = 1$ and $1 < p,p' < \infty$. The odd function $\frak{G}$ belongs to $L^p(\R)$ for $1 \leq p < 2$ but not to $L^2(\R)$. It verifies
\begin{equation}\label{20210309_18:13}
\|\frak{G}\|_{L^1(\R)} = \int_{\R} |\frak{G}(x)|\,\dx = 1.
\end{equation}
The Hilbert transform of $\frak{G}$ can be explicitly computed and is given by 
\begin{equation}\label{20210309_18:14}
\mathcal{H}(\frak{G})(x) = 
\begin{cases} 
-1, \ {\rm if}\  |x| < \tfrac12;\\
-1 + \frac{2|x|}{\sqrt{4x^2 -1}}, \ {\rm if}\  |x| > \tfrac12.
\end{cases}
\end{equation}
We refer the reader to \cite[p.~248, eq.~(25)]{Ba} for this computation\footnote{Letting $L(x)$ be the function on the left-hand side  of \cite[p. 248, eq.~(25)]{Ba} with $a = \frac12$, we have $\frak{G}(x) = \tfrac12\big(L(x) - L(-x)\big)$. Note also that the Hilbert transform in \cite{Ba} is defined with a multiplying factor of $-1$.}. We shall see in a moment that the fact that $\frak{G}$ has $L^1(\mathbb{R})$-norm equal to $1$ and that its Hilbert transform is constant (equal to $-1$) in the interval $[-\tfrac12,\tfrac12]$ is precisely what makes it magical. The graphs of $\frak{G}$ and $\mathcal{H}(\frak{G})$ are plotted in Figure \ref{figure_G}.

\smallskip

Take any $0 \neq F \in \mathcal{A}$, normalized so that $\|F\|_{L^1(\R)} = \int_{\R} F(x)\,\dx = 1$. Note that $F \in L^p(\R)$ for all $1 \leq p \leq \infty$. Using \eqref{20210310_07:54} with $F_1 = F$ and $F_2 = \frak{G}$, together with \eqref{20210309_18:13}, \eqref{20210309_18:14}, and the fact that ${\rm supp}(F) \subset [-\tfrac12, \tfrac12]$, we get the following relation
\begin{align}\label{20210310_08:21}
\begin{split}
\|\mathcal{H}(F)\|_{L^{\infty}(\R)} & = \|\mathcal{H}(F)\|_{L^{\infty}(\R)} \int_{\R} |\frak{G}(x)|\,\dx  \geq  \int_{\R} \mathcal{H}(F)(x) \, \overline{\frak{G}(x)}\,\dx \\
& = - \int_{\R} F(x)\, \overline{\mathcal{H}(\frak{G})(x)}\,\dx = \int_{-\frac12}^{\frac12} F(x)\,\dx = 1.
\end{split}
\end{align}
Since \eqref{20210310_08:21} holds for any such normalized $F \in \mathcal{A}$, we plainly get the lower bound
$${\bf C} \geq 1.$$

In addition, once we establish in the next subsection that ${\bf C}$ is actually equal to $1$, relation \eqref{20210310_08:21} also tells us that there are no extremizers for the problem (EP1) in the class $\mathcal{A}$. In fact, equality in \eqref{20210310_08:21} could only be attained if 
\begin{equation*}
\mathcal{H}(F)(x) = {\rm sgn}(x)\,\|\mathcal{H}(F)\|_{L^{\infty}(\R)} = {\rm sgn}(x)
\end{equation*}
for a.e.~$-\tfrac12 < x < \tfrac12$, which cannot occur since $\mathcal{H}(F)$ is odd and continuous when $F \in \mathcal{A}$.

\begin{figure} 
\includegraphics[scale=0.5]{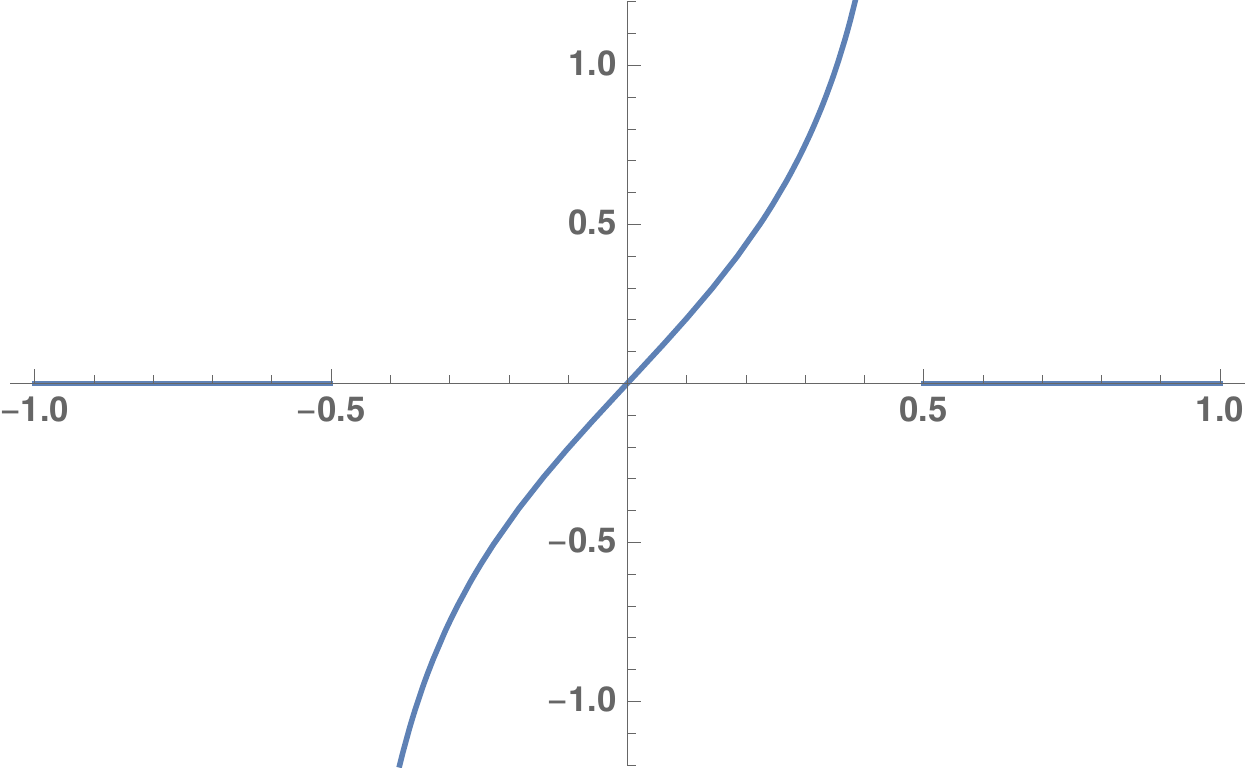}  \qquad 
\includegraphics[scale=0.5]{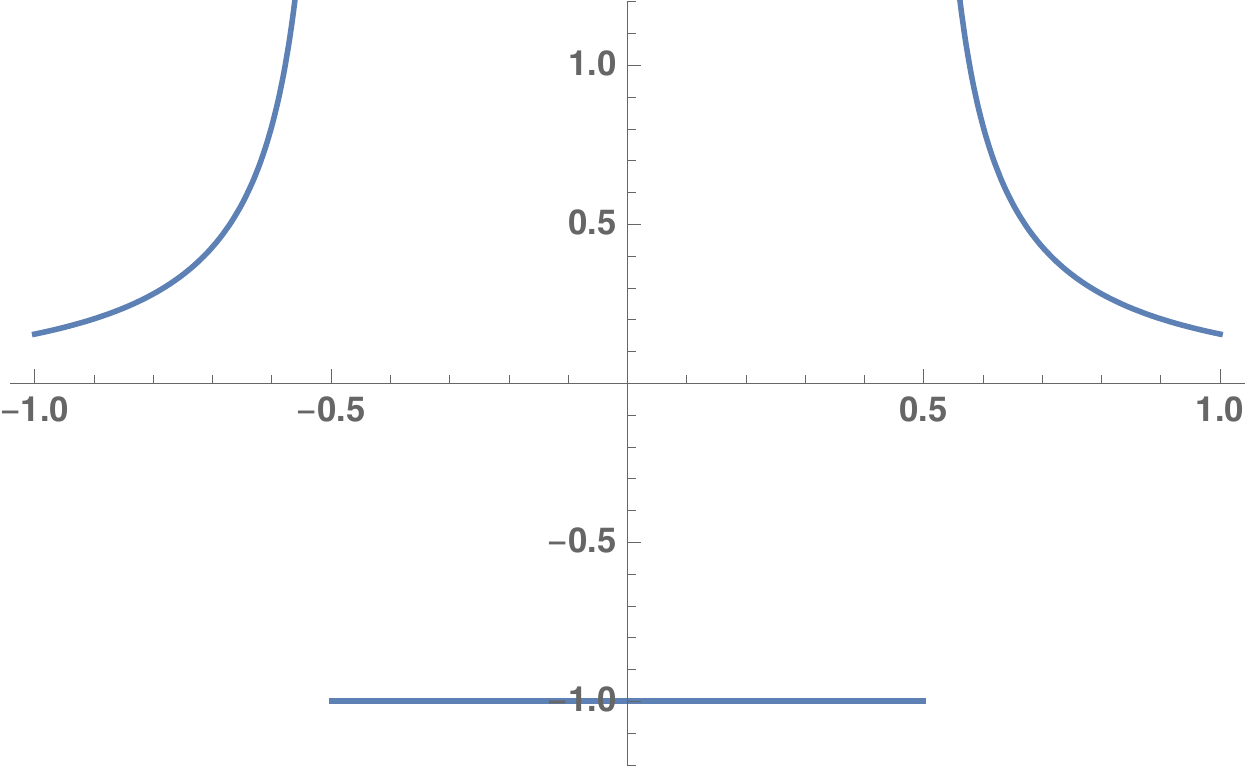}
\caption{On the left, the graph of the magic function $\frak{G}$. On the right, the graph of the Hilbert transform $\mathcal{H}(\frak{G})$.}
	\label{figure_G}
\end{figure}

\subsection{A rogue extremal function} We now turn our attention to the function $\frak{F}$ defined in \eqref{20210316_14:49}. Observe first that $\frak{F} \geq0$ in $[-\tfrac12, \tfrac12]$, $\frak{F}$ is continuous and radial decreasing on $\R\setminus\{0\}$ (with a logarithmic singularity at the origin), and $\frak{F}$ is smooth on $\R \setminus \{0, \pm\tfrac12\}$. Moreover, $\frak{F} \in L^p(\R)$ for $1 \leq p < \infty$, and we note that 
\begin{equation}\label{20210310_12:36}
\|\frak{F}\|_{L^1(\R)} = \int_{\R} \frak{F}(x)\,\dx = 1.
\end{equation}

\subsubsection{The Hilbert transform of $\frak{F}$} The Hilbert transform $\mathcal{H}(\frak{F})$ is an odd function. For almost every $x \in \R$ it is given by its singular integral representation. We may carefully apply integration by parts (excluding the singularities and then passing to the limit) to get, for a.e.~$x >0$,
\begin{align*}
\mathcal{H}(\frak{F})(x) & = {\rm p.v.} \, \frac{1}{\pi} \int_{\R} \frak{F}(x-t)\,\frac{1}{t}\,\dt = \frac{1}{\pi} \int_{\R} \frak{F}'(x-t)\,\log|t|\,\dt = \frac{1}{\pi} \int_{-\frac12}^{\frac12}\frak{F}'(t) \log|x-t|\,\dt\\
&= - \frac{1}{\pi} \int_{0}^{\frac12}\frak{F}'(t) \log\left(\frac{|x+t|}{|x-t|}\right)\,\dt \\
& = \frac{2}{\pi^2}\int_{0}^{\frac12}\frac{1}{t\,\sqrt{1 - 4t^2}} \log\left(\frac{|x+t|}{|x-t|}\right)\,\dt.
\end{align*}
This last integral can be evaluated explicitly, see \cite[\S 4.297 eqs.~8 and 10]{GR}, yielding
\begin{align}\label{20210310_12:37}
\mathcal{H}(\frak{F})(x) = 
\begin{cases} 
{\rm sgn}(x), \ {\rm if}\  |x| \leq \tfrac12;\\
\frac{2}{\pi} \arcsin\left(\frac{1}{2x}\right), \ {\rm if}\  |x| > \tfrac12.
\end{cases}
\end{align}
From \eqref{20210310_12:37} we see that $\|\mathcal{H}(\frak{F})\|_{L^{\infty}(\R)} = 1$, and given that $\frak{F}$ has the correct normalization \eqref{20210310_12:36}, it is essentially an extremizer for our problem. We say `essentially'  because $\frak{F}$ does not exactly belong to our class $\mathcal{A}$, but it is almost there (hence the {\it rogue} in the title of this subsection). The graphs of $\frak{F}$ and $\mathcal{H}(\frak{F})$ are plotted in Figure \ref{figure_F}.

\subsubsection{Approximating the rogue extremal function} We need to make a small correction to $\frak{F}$ via a standard approximation argument. Let $\varphi \in C^{\infty}_c(\R)$ be a non-negative radial decreasing function supported in $[-\tfrac12,\tfrac12]$, with $\int_{\R} \varphi(x)\,\dx = 1$ and Fourier transform $\widehat{\varphi}$ also non-negative. To construct such a function, we can just take $\varphi = \psi*\psi$, where $\psi$ is a smooth, radial decreasing, non-negative function supported in $[-\tfrac14, \tfrac14]$. Recall that the convolution of two radial decreasing functions is still radial decreasing (for a beautiful proof of this fact we refer the reader to \cite[p.~171]{Be}). For $\varepsilon >0$ small, let 
\[
\varphi_{\varepsilon}(x) := \tfrac{1}{\varepsilon} \, \varphi\!\left( \tfrac{x}{\varepsilon}\right) \quad {\rm and} \quad \frak{F}_{1-\varepsilon}(x) := \tfrac{1}{1-\varepsilon} \, \frak{F}\!\left( \tfrac{x}{1-\varepsilon}\right),
\]
and define
$$F^{\varepsilon} :=\frak{F}_{1-\varepsilon} * \varphi_{\varepsilon}.$$
Observe that $F^{\varepsilon}$ is a smooth, radial decreasing and non-negative function, supported in $[-\tfrac12,\tfrac12]$ with $\int_{\R} F^{\varepsilon}(x)\,\dx = 1$. Moreover, $\widehat{F^{\varepsilon}}(t) = \widehat{\frak{F}}\big((1-\varepsilon)t\big)\, \widehat{\varphi}(\varepsilon t) \in L^1(\R)$ (recall that $\widehat{\frak{F}}$ is bounded since $\frak{F} \in L^1(\R)$). Hence $F^{\varepsilon} \in \mathcal{A}$. 
At the level of the Hilbert transform, we have
\begin{align*}
\mathcal{H}(F^{\varepsilon})(x) = \big(\mathcal{H}(\frak{F}_{1-\varepsilon})* \varphi_{\varepsilon}\big) (x)= \left(\tfrac{1}{1- \varepsilon} \mathcal{H}(\frak{F})\left(\tfrac{^\centerdot}{1 - \varepsilon}\right)*\varphi_{\varepsilon}\right)\!(x),
\end{align*}
and we see from \eqref{20210310_12:37} that 
$$\|\mathcal{H}(F^{\varepsilon})\|_{L^{\infty}(\R)} = \frac{1}{1 - \varepsilon}.$$
As we have argued in \S\ref{Dichotomy}, since $F^{\varepsilon}$ is radial decreasing we do have $\mathcal{C}(F^{\varepsilon}) = \|\mathcal{H}(F^{\varepsilon})\|_{L^{\infty}(\R)}$. Sending $\varepsilon \to 0$ we conclude that 
$${\bf C} = 1.$$

\begin{figure}  \label{Magic_F}
\includegraphics[scale=0.5]{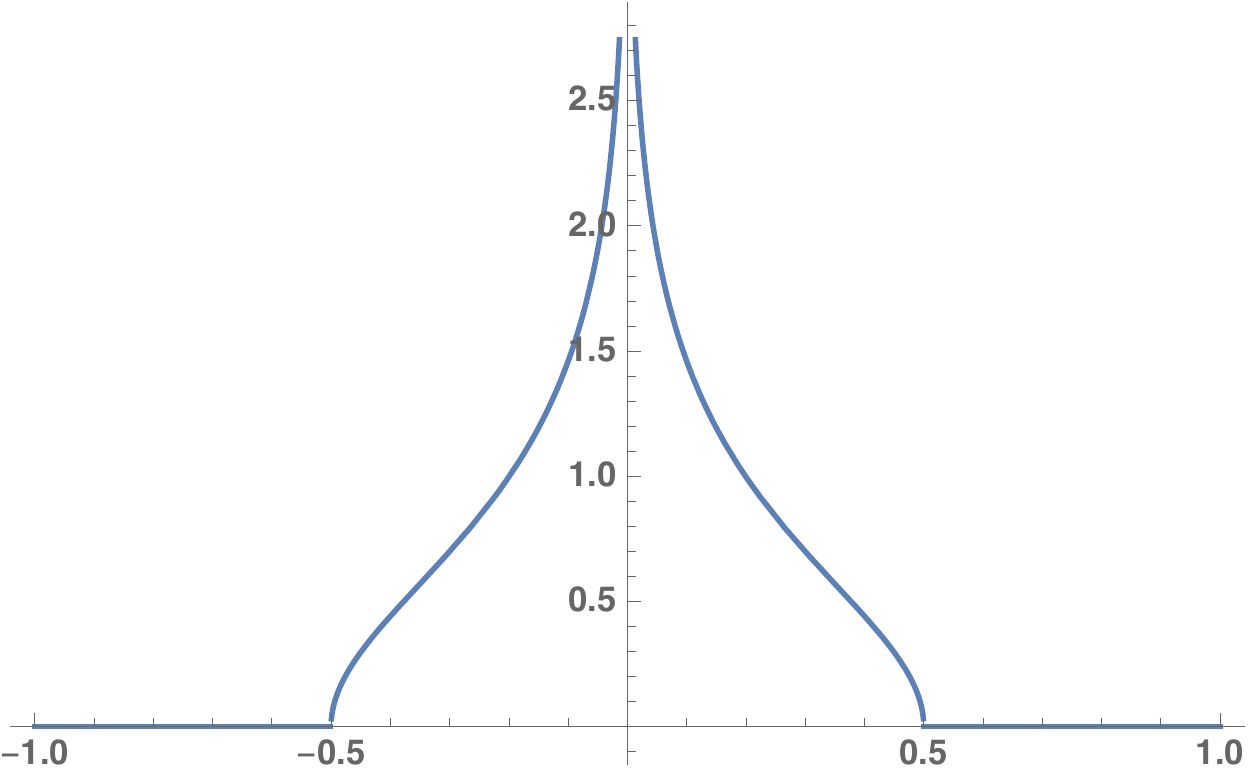}  \qquad 
\includegraphics[scale=0.5]{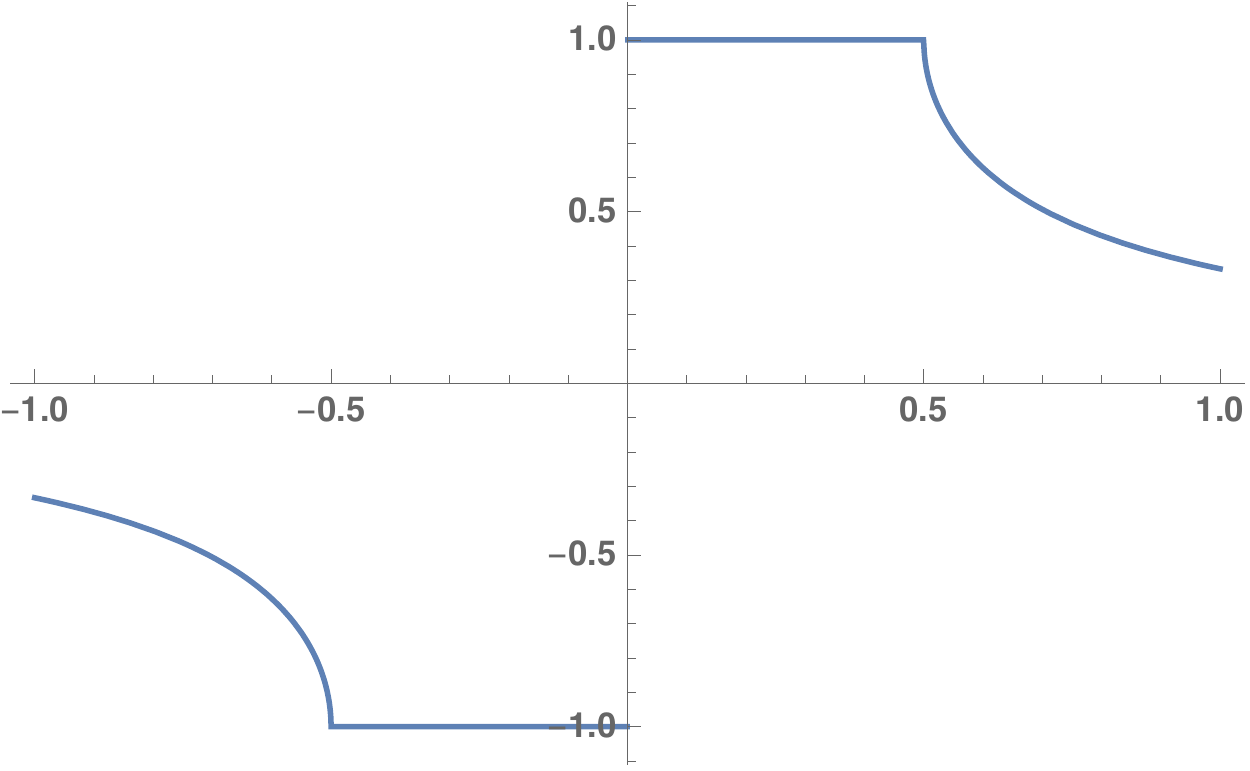}
\caption{On the left, the graph of the magic function $\frak{F}$. On the right, the graph of the Hilbert transform $\mathcal{H}(\frak{F})$.}
	\label{figure_F}
\end{figure}

\subsection{The extremal problem (EP2)} Note that $\frak{F} \in \mathcal{A}^*$ and that ${\bf C}^* \leq {\bf C} = 1$. We now verify the lower bound. Let $0 \neq F \in \mathcal{A}^*$ be a given function, normalized so that $\|F\|_{L^1(\R)} = \int_{\R} F(x)\,\dx = 1$. We may assume without loss of generality that $\|\mathcal{H}(F)\|_{L^{\infty}(\R)} < \infty$. Since $\mathcal{H}(F)(x) = 1/(\pi x) + O\big(1/|x|^2\big)$ for $|x|$ large, we find that $\mathcal{H}(F) \in L^p(\R)$ for any $1 < p \leq \infty$. This implies that $F$ must have been in $L^p(\R)$, for any $1 \leq p < \infty$, from the start.

\smallskip 

This last claim deserves a brief justification. An argument of Calder\'{o}n and Capri \cite[Lemma 4]{CC}\footnote{This lemma is stated for the situation when the singular integral operator belongs to $L^1$, but the proof works for $L^p$ ($1 < p < \infty$) as well. One simply applies Minkowski's inequality for integrals to arrive at eq.~(17) with $1 < p < \infty$.} shows that whenever $F \in L^1(\R)$ and $\mathcal{H}(F) \in L^p(\R)$, for some $1 < p < \infty$, and $\Psi$ is a continuous function of compact support, for a.e.~$x\in \mathbb{R}$ we have
\begin{equation}\label{20210323_14:08}
\mathcal{H}(F* \Psi)(x) = \big(\mathcal{H}(F) * \Psi\big)(x). 
\end{equation}
Letting $\varphi$ be a smooth function of compact support, with $\int_{\R} \varphi(x)\,\dx = 1$, and setting $\varphi_{\varepsilon}(x) := \tfrac{1}{\varepsilon} \varphi\left( \tfrac{x}{\varepsilon}\right)$ as usual,
identity \eqref{20210323_14:08} holds with $\Psi$ replaced by $\varphi_{\varepsilon}$. Since $F*\varphi_{\varepsilon}$ and $\mathcal{H}(F)*\varphi_{\varepsilon}$ belong to $L^p(\R)$ we may apply the Hilbert transform on both sides of \eqref{20210323_14:08}, using the fact that $\mathcal{H}^2 = -I$ on $L^p(\R)$, to arrive at 
\begin{equation}\label{20210323_14:09}
-(F*\varphi_{\varepsilon})(x) = \mathcal{H}\big(\mathcal{H}(F) * \varphi_{\varepsilon}\big)(x) 
\end{equation}
for a.e.~$x\in \mathbb{R}$. Letting $\varepsilon \to 0$, since $\mathcal{H}(F) * \varphi_{\varepsilon} \to \mathcal{H}(F)$ in $L^p(\R)$ and the Hilbert transform is bounded on $L^p(\R)$, the right-hand side of \eqref{20210323_14:09} converges to $\mathcal{H}\big(\mathcal{H}(F)\big)$ in $L^p(\R)$. The left-hand side of \eqref{20210323_14:09} converges to $-F$ a.e. The conclusion is that we must indeed have $-F = \mathcal{H}\big(\mathcal{H}(F)\big)$, and therefore $F \in L^p(\R)$ as well. An alternative way to argue when $F$ has compact support and $\int_{\R} F(x)\,\dx = 1$, is by observing that $F - \chi_{[-\frac12, \frac12]}$ belongs to the Hardy space $H^1(\R) = \{G \in L^1(\R) \ ; \mathcal{H}(G) \in L^1(\R)\}$. This is a Banach space with norm $\|G\|_{H^1(\R)}:=  \|G\|_{L^1(\R)} + \|\mathcal{H}(G)\|_{L^1(\R)}$ (see e.g.~\cite[Theorem 6.7.4]{Graf_book}) in which $\mathcal{H}$ is an isometry with $\mathcal{H}^2 = -I$.
\smallskip

Having gone through these considerations, the application of \eqref{20210310_08:21} is justified and we arrive at the conclusion that ${\bf C}^* \geq 1$, and hence ${\bf C}^* = 1$. Let us now discuss the uniqueness of the extremizer. Equality happens in \eqref{20210310_08:21} if and only if
\begin{equation*}
\mathcal{H}(F)(x) = {\rm sgn}(x)\,\|\mathcal{H}(F)\|_{L^{\infty}(\R)} = {\rm sgn}(x)
\end{equation*}
for a.e.~$-\tfrac12 < x < \tfrac12$. This implies that 
\begin{equation}\label{20210318_15:45}
\mathcal{H}(\frak{F} - F)(x) = 0
\end{equation}
for a.e.~$-\tfrac12 < x < \tfrac12$. We are now in position to invoke a suitable uniqueness result, first established in a classical paper by Tricomi \cite{T}, and revisited recently by Coifman and Steinerberger \cite[Theorem 1]{CS}.
\begin{lemma}[cf.~\cite{T} and \cite{CS}]\label{Lem_uniqueness}
Let $G$ be a real-valued function such that ${\rm supp}(G) \subset [-\tfrac12, \tfrac12\big]$ and $G(x)(1- 4x^2)^{1/4} \in L^2\big(-\tfrac12, \tfrac12\big)$. If $\mathcal{H}(G) \equiv 0$ on $\big(-\tfrac12, \tfrac12\big)$ then, for some $c \in \R$, we have
$$\qquad \qquad  \qquad \qquad G(x) = \frac{c}{\sqrt{1 - 4x^2}}   \qquad \ \  \big({\rm for}\ -\tfrac12 < x < \tfrac12\big).$$
\end{lemma}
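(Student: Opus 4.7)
The plan follows the classical Tricomi route via the Cauchy integral. I would introduce
$$\Phi(z) := \frac{1}{2\pi i}\int_{-1/2}^{1/2}\frac{G(t)}{t-z}\,\dt,$$
a holomorphic function on $\C\setminus[-1/2, 1/2]$ decaying like $1/|z|$ at infinity. By the Plemelj--Sokhotski formulas, its non-tangential boundary values satisfy $\Phi^+(x)-\Phi^-(x) = G(x)$ and $\Phi^+(x)+\Phi^-(x) = i\,\mathcal{H}(G)(x)$ for a.e.~$x\in(-1/2, 1/2)$, so the hypothesis $\mathcal{H}(G)\equiv 0$ on $(-1/2, 1/2)$ forces $\Phi^+(x) = -\Phi^-(x)$ along the cut.

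To cancel this sign change I would multiply by an algebraic factor. Let $R(z) := \sqrt{z-1/2}\sqrt{z+1/2}$ denote the branch of $\sqrt{z^2-1/4}$ holomorphic on $\C\setminus[-1/2, 1/2]$ with $R(z)/z\to 1$ as $|z|\to\infty$; a direct computation with principal square roots gives $R^+(x) = i\sqrt{1/4-x^2} = -R^-(x)$ on the cut, and $R$ vanishes at the endpoints $\pm 1/2$. The product $\Psi(z):=\Phi(z)R(z)$ therefore satisfies $\Psi^+=\Psi^-$ on $(-1/2, 1/2)$, and by a standard analytic continuation argument (Schwarz reflection across the cut, using continuity of the common boundary values) extends holomorphically to a function on $\C\setminus\{\pm 1/2\}$.

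The third step is to show $\Psi$ is entire and bounded, whereupon Liouville's theorem forces $\Psi\equiv K$ for some $K\in\C$. At infinity $\Phi(z)=O(1/|z|)$ while $R(z)\sim z$, so $\Psi$ has a finite limit. For the endpoint singularities, a weighted Cauchy--Schwarz inequality yields
$$|\Phi(z)|^2 \le \frac{1}{4\pi^2}\left(\int_{-1/2}^{1/2}G(t)^2\sqrt{1-4t^2}\,\dt\right)\!\left(\int_{-1/2}^{1/2}\frac{\dt}{|t-z|^2\sqrt{1-4t^2}}\right)\!,$$
in which the first factor is finite by hypothesis and the second is $O(|z\mp 1/2|^{-3/2})$ as $z\to\pm 1/2$ off the cut (verified by the substitution $t = \pm 1/2 \mp s$, the dominant contribution coming from $s$ near $0$). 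Combined with $R(z)\sim (z\mp 1/2)^{1/2}$ near the endpoints, this yields $|\Psi(z)|\lesssim |z\mp 1/2|^{-1/4}$ uniformly off the cut, hence by continuity across the cut in a full punctured disk. Thus $(z\mp 1/2)\Psi(z)\to 0$, and Riemann's removable singularity theorem applied to $(z\mp 1/2)\Psi(z)$ makes the singularity at each endpoint removable.

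Finally, from $\Phi(z) = K/R(z)$ and the Plemelj jump formula,
$$G(x) = \Phi^+(x)-\Phi^-(x) = K\!\left(\frac{1}{R^+(x)}-\frac{1}{R^-(x)}\right) = \frac{c}{\sqrt{1-4x^2}}$$
for an appropriate constant $c$, and the realness of $G$ forces $c\in\R$. The main obstacle I anticipate is the endpoint analysis in step three: the exponent $1/4$ in the weight $(1-4x^2)^{1/4}$ is calibrated exactly to match the vanishing rate of $R$ at $\pm 1/2$, making the bound $|\Psi(z)|\lesssim |z\mp 1/2|^{-1/4}$ essentially sharp. Any relaxation of the hypothesis would enlarge the kernel of the finite Hilbert transform beyond the one-dimensional subspace spanned by $1/\sqrt{1-4x^2}$, so verifying that this one-dimensional kernel is captured exactly --- and not a larger one --- is the delicate point of the argument.
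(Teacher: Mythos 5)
The paper does not prove this lemma; it simply cites Tricomi \cite{T} and Coifman--Steinerberger \cite{CS}. Your proof follows the classical Tricomi route via the Cauchy transform and a scalar Riemann--Hilbert problem, which is indeed the expected argument, and the skeleton (Plemelj to get $\Phi^+ = -\Phi^-$, multiplication by $R(z) = \sqrt{z^2 - \tfrac14}$ to cancel the sign jump, continuation across the slit, removal of the endpoint singularities, Liouville, recover $G$ from the jump of $K/R$) is correct and complete in outline.

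There is, however, a genuine gap in the endpoint step. The Cauchy--Schwarz bound on the second factor $\int_{-1/2}^{1/2}\frac{\dt}{|t-z|^2\sqrt{1-4t^2}}$ is of order $|z-\tfrac12|^{-3/2}$ only for $z \to \tfrac12$ \emph{nontangentially}. For a tangential approach, say $z = \tfrac12 - r + i\eta$ with $0 < \eta \ll r$, the integral is of order $\eta^{-1} r^{-1/2}$, which is much larger than $|z-\tfrac12|^{-3/2} \asymp r^{-3/2}$. So the claimed inequality $|\Psi(z)| \lesssim |z-\tfrac12|^{-1/4}$ ``uniformly off the cut'' does not follow from the estimate you wrote, and the subsequent ``by continuity across the cut in a full punctured disk'' step is therefore not justified as stated. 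The gap can be closed: since $\mathcal{H}(G) \equiv 0$ gives $\Phi^{\pm} = \pm\tfrac12 G$ on the slit, the boundary values of $\Psi$ on the slit are $\Psi^{\pm}(x) = \tfrac{i}{4}G(x)\sqrt{1-4x^2}$, which lie in $L^2$ by the hypothesis $G(x)(1-4x^2)^{1/4} \in L^2$. Combining this $L^2$ boundary control with the nontangential pointwise bound (for instance, via a Smirnov-class/$E^2$ argument or an $L^2$ estimate of $\Psi$ on small circles about $\pm\tfrac12$) yields removability. Separately, the continuation across the slit should be carried out via Morera's theorem rather than Schwarz reflection: the equality $\Psi^+=\Psi^-$ is only a.e.\ with $L^1_{\rm loc}$ boundary data, not a continuity statement, and Morera is the tool that handles exactly this situation. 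None of this changes the overall strategy, which is sound; the two points above are where a careful write-up would need to do more work.
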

From \eqref{20210318_15:45} and Lemma \ref{Lem_uniqueness} we arrive at 
$$\frak{F}(x) - F(x) =  \frac{c}{\sqrt{1 - 4x^2}}$$
for a.e.~$-\tfrac12 < x < \tfrac12$. Since $\int_{\R} \frak{F}(x)\,\dx = \int_{\R} F(x)\,\dx = 1$, we conclude that $c=0$ and $F = \frak{F}$, as proposed.

\section*{Acknowledgements}
This project started at the workshop {\it Arithmetic statistics, discrete restriction, and Fourier analysis} at the American Institute of Mathematics (AIM) in 2021. We thank Theresa Anderson, Frank Thorne, and Trevor Wooley for the organization of the workshop, and the staff of AIM for providing a superb scientific atmosphere. We are thankful to William Beckner, Tiago Picon, Ruiwen Shu, Mateus Sousa, and the referee for enlightening remarks. We also thank Kannan Soundararajan for sharing some of his unpublished notes on the theme. EC acknowledges support from FAPERJ - Brazil, AF was supported by NSF DMS-2101769 and the NSF Postdoctoral Fellowship DMS-1703695, AM was supported by NSF DMS-1854398 FRG, MBM was supported by NSF DMS-2101912 and a Simons Foundation Collaboration Grant for Mathematicians, and CT-B was supported by NSF DMS-1902193 and NSF DMS-1854398 FRG.

\end{document}